\documentclass[notitlepage]{article}
\usepackage[left=1.3in, right=1.3in, top=1.3in, bottom=1.3in]{geometry}

\usepackage{amsmath,amssymb,amsfonts}
\usepackage{mathtools}
\usepackage{mathptmx}
\usepackage{enumerate}
\usepackage{comment}
\usepackage{color}
\usepackage{lmodern}

\usepackage{graphicx}
\usepackage{multirow}
\usepackage{amsthm}
\usepackage{mathrsfs}
\usepackage{textcomp}
\usepackage{booktabs}
\usepackage{listings}
 
\usepackage{amsbsy}
\usepackage{hyperref}

\usepackage{algorithmic}
\usepackage[algoruled,boxed,lined]{algorithm2e}

\newtheorem{theorem}{Theorem}[section]
\newtheorem{lemma}[theorem]{Lemma}
\newtheorem{corollary}[theorem]{Corollary}

\theoremstyle{definition}
\newtheorem{definition}{Definition}[section]
\newtheorem{remark}{Remark}[section]

\usepackage{amssymb, amsfonts} 
\usepackage{xcolor} 
\usepackage[normalem]{ulem} 

\usepackage{comment}

\usepackage{graphicx}

\usepackage{algorithmic}

\usepackage[labelfont=bf]{caption}

\newcommand{\stkout}[1]{\ifmmode\text{\sout{\ensuremath{#1}}}\else\sout{#1}\fi} 

\def\Def{\stackrel{\mathrm{def}}{=}}

\def\inter{{\rm int \,}}

\def\vf{\varphi}
\def\dom{{\rm dom \,}}
\def\beq{\begin{equation}}
\def\eeq{\end{equation}}
\newcommand{\rint}{{\rm rint\,}}

\def\R{\mathbb{R}}
\def\E{\mathbb{E}}
\def\S{\mathbb{S}}

\def\BI{\begin{itemize}}
\def\EI{\end{itemize}}

\newcommand{\SetEQ}{\setcounter{equation}{0}}
\newcommand{\refLE}[1]{\ensuremath{\stackrel{(\ref{#1})}{\leq}}}
\newcommand{\refEQ}[1]{\ensuremath{\stackrel{(\ref{#1})}{=}}}
\newcommand{\refGE}[1]{\ensuremath{\stackrel{(\ref{#1})}{\geq}}}

\newcommand{\refEQI}[2]{\ensuremath{\stackrel{(\ref{#1})_{#2}}{=}}}

\newcommand{\UL}[1]{\underline{#1}}

\newtheorem{assumption}{Assumption}

\newtheorem{example}{Example}
\newcommand{\half}{\mbox{${1 \over 2}$}}

\def\ba{\begin{array}}
\def\ea{\end{array}}
\def\beann{\begin{eqnarray*}}
\def\eeann{\end{eqnarray*}}
\def\bea{\begin{eqnarray}}
\def\eea{\end{eqnarray}}

\def\BT{\begin{theorem}}
\def\ET{\end{theorem}}
\def\BL{\begin{lemma}}
\def\EL{\end{lemma}}
\def\BC{\begin{corollary}}
\def\EC{\end{corollary}}
\def\BE{\begin{example}}
\def\EE{\end{example}}
\def\BD{\begin{definition}}
\def\ED{\end{definition}}
\def\BR{\begin{remark}}
\def\ER{\end{remark}}
\def\BAS{\begin{assumption}}
\def\EAS{\end{assumption}}
\def\BI{\begin{itemize}}
\def\EI{\end{itemize}}

\def\BMP{\begin{minipage}{9.5cm}}
\def\EMP{\end{minipage}}
\def\MPT{\begin{minipage}{11.5cm}}
\def\EPT{\end{minipage}}

\def\la{\langle}
\def\ra{\rangle}

\def\QF{\hspace{5ex} \Box}
\def\QR{\hfill \Box}

\begin{document}

\title{{Multiconic Optimization for Symmetric Cones and Hyperbolic Coupling}}

\author{Marianna E.-Nagy \and  Yurii Nesterov \and Petra Ren\'ata Rig\'o}

\date{\textit{Corvinus Centre for Operations Research at Corvinus Institute for Advanced Studies, Corvinus University of Budapest, Budapest, Hungary}}

\maketitle

\textbf{Abstract.} We develop a new interior-point algorithm for solving multiconic optimization problems using the parabolic target space approach.
The feasible cone in these problems is composed as a direct product of many small-dimensional cones.
Our approach is based on a new concept, called the hyperbolic coupling. This provides a new framework that has an advantage of interdependent pairs of primal-dual variables. In this way, their behaviour is much more controllable. We justify all main steps in the complexity analysis of the algorithm and prove that the overall complexity of solving this type of large-scale nonlinear problems by our algorithm is comparable with the best known complexity for solving linear programming problems of the same dimension.

\textbf{Keywords.} Multiconic optimization, hyperbolic coupling, interior-point algorithm, functional proximity measure. 

\textbf{MSC Classification.} 90C51

\section{Introduction}

The development of interior-point algorithms (IPAs) for linear programming (LP) originated with the seminal work of Karmarkar \cite{Karmarkar}, who introduced a novel projective method. Later on, several IPAs have been published for different classes of problems, see from \cite{li_toh,tuncel,Pataki2000}. The most important results related to IPAs are summarized in the monographs of Roos et al. \cite{roos} and Ye \cite{ye1}.

In multiconic optimization, we consider a direct product of small-dimensional symmetric cones. Hence, LP is a special case of multiconic optimization, where all cones are one-dimensional. Multiconic optimization often arises in practical applications, see \cite{kocvaraetal}.

Note that in 2016, the horizontal linear complementarity problem over
Cartesian product of symmetric cones 
has been introduced by Asadi et al. \cite{amdz2016}. Later on, several IPAs have been proposed for solving these types of problems, see \cite{dzs_rpr_schlcp,asadi_etal_2018}.

Nesterov and Nemirovskii \cite{NN} introduced the framework of self-concordant functions, which is a key element to extend several classes of IPAs from LP to nonlinear settings. Nesterov and Todd \cite{NT1,NT2} provided a theoretical foundation for the study of efficient IPAs for problems that are extensions of LP. They considered self-scaled cones, which include as special cases the cone of positive semidefinite matrices, the second-order cone, and the nonnegative orthant. G\"uler \cite{guler} showed that the family of self-scaled cones is identical to the symmetric cones.

Nesterov \cite{DAM} introduced the so-called parabolic target space (PTS) approach for LP, which can start at an arbitrary strictly feasible solution. He
embedded the problem into a higher-dimensional one and introduced modeling variables according to the PTS. The proposed method is based on a parabolic barrier function, which plays a key role in the algorithm. In \cite{ParLCP}, we extended this IPA for weighted monotone LCPs. In \cite{UTD_LP}, we proposed a new PTS IPA for LP problems based on a new direction, called the universal tangent direction. In \cite{Nesterov_loc_conv}, Nesterov showed that the IPA presented in \cite{UTD_LP} has favorable local behavior under some non-degeneracy assumptions. 

In this paper, we propose a new IPA in the PTS framework for solving multiconic optimization problems. We introduce a new concept, called \textit{hyperbolic coupling}. This gives a new framework, which has an advantage that the primal-dual variables are interdependent between the cones. In this way, their behaviour is much more controllable. We also propose the new inequality (\ref{eq-HFenTwo}), which plays a key role in the complexity analysis of the new IPA.

The paper is organized as follows. In Section \ref{sc-Not} we present the notations used in the paper, and give the definition of the self-concordant functions and barriers, respectively. Section \ref{sc-Couple} presents the theory of self-scaled barriers, while Section \ref{coupling} contains the new concept of hyperbolic coupling. In Section \ref{sc-Mult} we present the multiconic optimization problem \cite{nemirovski_scheinberg} and the new PTS approach. In Section \ref{sc-ASD} we show how the search directions are computed in the new algorithm. Section \ref{sc-Alg} presents the functional proximity measure and the main parts of the complexity analysis of the algorithm. In Section \ref{sc-Start} we present the strategy for choosing the initial point. After that, the concluding remarks and further research topics are presented.  In Appendix, we show how to compute the scaling point for the Lorentz cone, which seems to be absent in the literature. 

\section{Notations and Generalities}\label{sc-Not}
\SetEQ

Let $\E$ be a finite-dimensional space and $\E^*$ be the space of linear functions on $\E$. For $x \in \E$ and $s \in \E^*$, denote by $\la s, x \ra$ the value of function $s$ at $x$. We use the same notation $\la \cdot, \cdot \ra$ for different spaces. Thus, its actual meaning is defined by the context.
For a linear operator $A: \E_1 \to \E^*_2$, we denote by $A^*$ its {\em adjoint operator}:
\[\la A x, y \ra  =  \la A^* y, x \ra, \quad x \in \E_1, \; y \in  \E_2.
\]
Thus, $A^*: \E_2 \to \E_1^*$. The operator $A: \E \to \E^*$ is positive semidefinite (notation $A \succeq 0$) if $\la A x, x \ra \geq 0$ for all $x \in \E$. We write $A \succeq B$ if $A-B \succeq 0$. 

In case of $\E = \R^n$, we have $\E^* = \R^n$ and
\[
\la s, x \ra \; = \; \sum\limits_{i=1}^n s^{(i)} x^{(i)}, \quad \| x \| \; \Def \; \la x, x \ra^{1/2}, \quad s, x \in \R^n.
\]
Vector $e \in \R^n$ is the vector of all ones, and $e_i$, $i = 1, \dots , n$, are coordinate vectors in $\R^n$.

For function $f(\cdot)$ with open domain $\dom f \subseteq \E$, we denote by
\[
\ba{rcl}
D^kf(x)[h_1, \dots, h_k], \quad x \in \dom f, \; h_i \in \E, \; i = 1, \dots, k,
\ea
\]
its $k$th directional derivative at $x$ along $k$ directions. It is a symmetric $k$-linear form. If all directions are the same, we use notation $D^kf(x)[h]^k$. Thus, under the mild assumptions, for the gradient and the Hessian of function $f(\cdot)$, we have the following relations:
\[
\ba{rcl}
Df(x)[h] & = & \la \nabla f(x), h \ra, \quad D^2f(x)[h]^2 \; = \; \la \nabla^2 f(x) h, h \ra.
\ea
\]

Using the third derivative, we can define the following objects:
\begin{eqnarray}\label{def-D3}
D^3 f(x)[h_1,h_2,h_3] &\equiv& \la D^3f(x)[h_1,h_2], h_3 \ra  \nonumber\\
&\equiv&  \la D^3f(x)[h_1] h_2, h_3 \ra, \quad h_1, h_2, h_3 \in \E.
\end{eqnarray}
Thus, $D^3f(x)[h_1,h_2] \in \E^*$ and $D^3f(x)[h_1]$ is a self-adjoint linear operator from $\E$ to $\E^*$. Note that
\[
D^3f(x)[h_1,h_2]  =   D^3f(x)[h_1]h_2, \quad h_1, h_2 \in \E.
\]

\subsection*{Self-concordant functions}

Let us recall some facts from the theory of self-concordant functions (e.g. \cite{YN-LN-2018,NN}). 

\BD 
Let a closed convex function $f \in {\cal C}^3(\E)$ have an open domain. It is called {\em self-concordant} (SCF) if its third directional derivative is bounded by an appropriate power of the second one: 
\beq\label{def-SCF}
\ba{rcl}
D^3f(x)[h]^3 & \leq & 2 \Big( D^2 f(x)[h]^2 \Big)^{3/2}, \quad x \in \dom f, \; h \in \E.
\ea
\eeq
\ED

If $\dom f$ contains no straight line, then its Hessian is positive definite at any $x \in \dom f$. In this paper, we mainly deal with such functions. However, $\dom f$ still can be unbounded. In this case, the following relation is important:
\[
\la \nabla^2 f(x) d, d \ra^{1/2}  \leq  - \la \nabla f(x), d \ra
\]
for any $x \in \inter \dom f$ and any {\em recession direction} $d$ of the set $\dom f$.

The dual function $f_*(s) = \max\limits_{x \in \dom f} [ - \la s, x \ra - f(x)]$ for any SCF is also self-concordant. Note that $- \nabla f(x) \in \dom f_*$ for all $x \in \dom f$, and
\[
\ba{rcl}
\nabla f_*(s) & = & - x(s) \; \Def\; -\arg\max\limits_{x \in \dom f} [ - \la s, x \ra - f(x) ], \quad s \in \dom f_*.
\ea
\]
For any $x \in \dom f$ and $s \in \dom f_*$, we have
\begin{align}
f(-\nabla f_*(s)) &= \langle s, \nabla f_*(s) \rangle -f_*(s), &  f_*(-\nabla f(x)) &= \langle \nabla f(x), x \rangle -f(x),\nonumber\\ 
\nabla f(-\nabla f_*(s)) & =  -s,&  \nabla f_*(-\nabla f(x))  &=  -x,  \label{eq-FDual}\\
\nabla^2 f(-\nabla f_*(s)) & =  [ \nabla^2 f_*(s)]^{-1}, &
\nabla^2 f_*(-\nabla f(x)) & =  [ \nabla^2 f(x)]^{-1}.\nonumber
\end{align}
One of the main properties of SCF is that the {\em Dikin Ellipsoid} defined as
\[
\ba{rcl}
W^f_r(x) & = & \{ y \in \E:\; \la \nabla^2 f(x)(y-x),y - x \ra \leq r^2\}, \quad x \in \dom f,
\ea
\]
belongs to $\dom f$ for any $r \in [0,1)$. 

In this paper, we often use the local norms defined by the Hessians of SCF. For $x \in \dom f$, $s \in \dom f_*$, $u \in \E$, and $g \in \E^*$, we adopt the following notation:
\[
\ba{rclrcl}
\| u \|_{\nabla^2 f(x)} & = & \la \nabla^2 f(x) u, u \ra^{1/2}, &  \quad \| g \|_{\nabla^2 f(x)} & = & \la  g, [\nabla^2 f(x)]^{-1} g \ra^{1/2}, \\
\\
\| g \|_{\nabla^2 f_*(s)} & = & \la g, \nabla^2 f_*(s) g \ra^{1/2}, &  \quad \| u \|_{\nabla^2 f_*(s)} & = & \la [\nabla^2 f_*(s)]^{-1} u, u \ra^{1/2}.
\ea
\]
If no ambiguity arise, we use the corresponding shortcuts $\| u \|_x$, $\| g \|_x$, $\| g \|_s$, and $\| u \|_s$. Hence, the sense of notation $\| a \|_b$ depends on the sets and the spaces containing $a$ and $b$. 

For SCF, we often use the following relations:
\[
(1-r)^2 \nabla^2 f(x)  \preceq  \nabla^2 f(y) \; \preceq \; \frac{1}{(1-r)^2} \nabla^2 f(x), \quad x \in \dom f,
\]
which are valid for all $y \in W^f_r(x)$ and $r \in [0,1)$. The following inequalities are also important:
\beq\label{eq-SCF}
\ba{rcl}
\omega(r) & \leq & f(y) - f(x) - \la \nabla f(x), y - x \ra \; \leq \; \omega_*(r),
\ea
\eeq
where $x \in \dom f$, $y \in W^f_r(x)$, $r \in [0,1)$, and
\[
\ba{rcl}
\omega(\tau) & = & \tau - \ln(1+\tau), \quad \omega_*(\tau) \; = \; - \tau - \ln(1-\tau), \quad \tau \in [0,1).
\ea
\]
Sometimes the following variant of inequalities (\ref{eq-SCF}) is useful:
\beq\label{eq-SCF1}
\ba{rcl}
\omega(\delta) \; \leq \; f(x) - f(y) - \la \nabla f(y), x - y \ra & \leq &  \omega_*(\delta),
\ea
\eeq
with $\delta = \| \nabla f(y) - \nabla f(x) \|_x$ (for (\ref{eq-SCF1}$_2$), we need $\delta < 1$, see Theorem 5.1.12 in \cite{YN-LN-2018}).

\subsection*{Self-concordant barriers}

Let us recall now the properties of {\em self-concordant barriers } (SCB) for convex cones.
\BD\label{def-SCB}
A self-concordant function $F(\cdot)$ is called a {\em self-concordant barrier}, if there exists a constant 
\beq\label{eq-DefNu}
\ba{rcl}
\nu & \geq & 1
\ea
\eeq
such that for all $x \in \dom F$ and $h \in \E$ we have
\beq\label{eq-SCB}
\ba{rcl}
\la \nabla F(x), h \ra^2 & \leq & \nu \la \nabla^2 F(x) h, h \ra.
\ea
\eeq
The constant $\nu$ is called the {\em parameter} of the barrier.
If $\nabla^2 F(x) \succ 0$, then the condition (\ref{eq-SCB}) is equivalent to the following:
\[
\lambda_F^2(x) \; \Def \; \| \nabla F(x) \|_x^2  \leq  \nu, \quad x \in \dom F.
\]
\ED
This value is responsible for the complexity of the set $\dom F$ for corresponding IPMs. 

In the theory of IPMs, the most important feasible sets are convex cones. A cone $K$ is called {\em proper} if it is closed convex and pointed (contains no straight lines). For such cones, SCBs possess a natural property of {\em logarithmic homogeneity}:
\beq\label{def-HomB}
\ba{rcl}
F(\tau x) & \equiv & F(x) - \nu \ln \tau, \quad x \in \inter K, \; \tau > 0.
\ea
\eeq
This identity has several important consequences: for all $x \in \inter K$ and $\tau > 0$, we have
\begin{align}
\nabla &F(\tau x)  =  {1 \over \tau} \nabla F(x),&   D^k &F(\tau x)  =  {1 \over \tau^k} D^k F(x), \;\; k \geq 2, \label{eq-Hom12}\\
\la \nabla &F(x), x \ra  =  - \nu, & \la \nabla^2 &F(x) x, x \ra  =  \nu, \qquad\qquad\qquad \| \nabla F(x) \|^2_x =  \nu. \label{eq-HomFX} \\
\nabla^2 &F(x) x  =  - \nabla F(x), &   D^3 &F(x)[x] =  - 2 \nabla^2 F(x). \label{eq-Hom1}\\
D^4 &F(x) [x]  =  - 3 D^3 F(x), &   D^4 &F(x)[x,x] =  6 \nabla^2 F(x). \label{eq-Hom2}
\end{align}

Thus, the parameter of a logarithmically homogeneous SCB for a proper cone (which we call {\em regular barrier}) is equal to the degree of logarithmic homogeneity.

It is important that the dual barrier $F_*(\cdot)$ for the regular barrier $F(\cdot)$ is a regular barrier for the dual cone
\[
K^* \; = \;  \Big\{ s \in \E^*: \; \la s, x \ra \geq 0, \; \forall x \in K \Big\},
\]
with the same value of barrier parameter. 
\BE\label{ex-BarSDP}
Let $\S^n = \{X \in \R^{n \times n}:\; X = X^T  \}$. Denote $\S^n_+ \Def \{ X \in \S^n: \; X \succeq 0\}$. It is a proper cone, which admits the following self-concordant logarithmically homogeneous barrier:
\[
F(X) \; = \; - \ln \det X, \quad \nu = n.
\]
Note that $(\S^n_+)^* = \S^n_+$ and $F_*(S) = - \ln \det S - n$. The derivatives of these barriers are very simple:
\[
\nabla F(X)  =  - X^{-1}, \quad \nabla^2 F(X) H \; = \; X^{-1} H X^{-1}, \]
\[ [\nabla^2F(X)]^{-1}H  =  X H X, \quad X \succ 0, \; H \in \S^n. 
\quad  \QR
\]
\EE

Using a stronger version of the Fenchel inequality and also considering its dual side, we obtain the following new two-sided bound.

\begin{lemma}{}
Let $K$ be a proper cone and $F: \inter K \to \mathbb{R}$ be a self-concordant barrier function with parameter $\nu$. For all $x \in \inter K$ and $s \in \inter K^*$, we have 
\beq\label{eq-HFenTwo}
\ba{rcl}
\nu \ln {\nu \over \la s, x \ra } & \leq & F(x) + F_*(s) + \nu \; \leq \; \nu \ln {\la \nabla F(x) , \nabla F_*(s) \ra \over \nu}, 
\ea
\eeq
where the equality in both sides is achieved only for $s = - \mu \nabla F(x)$ with some $\mu > 0$.
\end{lemma}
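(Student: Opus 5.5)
Both inequalities come from the Fenchel identity applied along the ray through $-\nabla F(x)$ (resp. $-\nabla F_*(s)$), combined with logarithmic homogeneity. We use two facts: $F_*$ is a regular barrier with the same parameter $\nu$ and $F_*(\tau s) = F_*(s) - \nu\ln\tau$; and from (\ref{eq-FDual}) together with (\ref{eq-HomFX}), $F_*(-\nabla F(x)) = \la \nabla F(x), x\ra - F(x) = -\nu - F(x)$. The latter gives $F(x) + F_*(-\nabla F(x)) + \nu = 0$, so the middle expression vanishes on the pair $(x,-\nabla F(x))$.

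\emph{Left inequality.} By the definition of the conjugate, $F(x) + F_*(s) \geq -\la s, x\ra$ (Fenchel--Young). Apply this to $\tau s$ with $\tau>0$:
\[
F(x) + F_*(s) - \nu\ln\tau \geq -\tau\la s,x\ra .
\]
Maximizing $\nu\ln\tau - \tau\la s,x\ra$ over $\tau$ gives $\tau = \nu/\la s,x\ra$. Substituting yields $F(x)+F_*(s) \geq \nu\ln\frac{\nu}{\la s,x\ra} - \nu$, which is the left bound. Equality in Fenchel--Young holds iff $x$ is the maximizer defining $F_*(\tau s)$, i.e. $\tau s = -\nabla F(x)$. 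Hence $s = -\mu\nabla F(x)$ with $\mu = 1/\tau$.

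\emph{Right inequality.} Apply the same argument on the dual side. For $F = (F_*)_*$ on a proper cone, Fenchel--Young with the point $y = -t\nabla F_*(s) \in \inter K$ gives
\[
F(x) \leq F(y) - \la \nabla F(y), y - x\ra
\]
by convexity, i.e. the tangent inequality at $y$ written in reverse. Directly: convexity of $F$ gives $F(y) \geq F(x) + \la\nabla F(x), y-x\ra$, so
\[
F(x) \leq F(y) - \la \nabla F(x), y - x\ra = F(y) + \la\nabla F(x), x\ra - \la\nabla F(x), y\ra = F(y) - \nu + t\la \nabla F(x),\nabla F_*(s)\ra .
\]
By (\ref{eq-FDual}) and homogeneity, $F(-\nabla F_*(s)) = \la s,\nabla F_*(s)\ra - F_*(s) = -\nu - F_*(s)$, so $F(y) = -\nu - F_*(s) - \nu\ln t$. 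Therefore
\[
F(x)+F_*(s)+\nu \leq -\nu - \nu\ln t + t\la\nabla F(x),\nabla F_*(s)\ra .
\]
The product $\la\nabla F(x),\nabla F_*(s)\ra$ is positive, since $-\nabla F(x)\in\inter K^*$ and $-\nabla F_*(s)\in \inter K$. Minimizing over $t$ gives $t = \nu/\la\nabla F(x),\nabla F_*(s)\ra$, and the right side becomes $\nu\ln\frac{\la\nabla F(x),\nabla F_*(s)\ra}{\nu}$, as required.

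\emph{Equality cases.} Equality in the right inequality requires equality in the convexity inequality. Since $\nabla^2 F \succ 0$ ($K$ is pointed), $F$ is strictly convex, so this forces $x = y = -t\nabla F_*(s)$. Then $-\nabla F(x) = \nabla F(\nabla F_*(s))/t = s/t$ by (\ref{eq-Hom12}) and (\ref{eq-FDual}), i.e. $s = -t\nabla F(x)$. Conversely, for $s = -\mu\nabla F(x)$ both sides can be checked directly using (\ref{eq-HomFX}): $\la s,x\ra = \mu\nu$, and $\nabla F_*(s) = -x/\mu$, which gives $\la\nabla F(x),\nabla F_*(s)\ra = \nu/\mu$. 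The main care point is tracking signs and the homogeneity of gradients under scaling; otherwise the argument is elementary.
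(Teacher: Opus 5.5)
Your proof is correct and is essentially the paper's argument. For the left bound you use Fenchel--Young plus optimization over the homogeneity scaling. For the right bound you use the tangent inequality $F(y)\ge F(x)+\langle\nabla F(x),y-x\rangle$ at $y=-t\nabla F_*(s)$, which is exactly the paper's dual Fenchel inequality for the pair $(-\nabla F_*(s),-\nabla F(x))$ once you substitute $F_*(-\nabla F(x))=\langle\nabla F(x),x\rangle-F(x)$; your scaling of $y$ is equivalent to the paper's substitution $x\mapsto x/t$.

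Two cosmetic fixes are needed. First, delete the first displayed inequality $F(x)\le F(y)-\langle\nabla F(y),y-x\rangle$: it points the wrong way and is never used. Second, in the equality case write $-\nabla F(x)=-\nabla F(-\nabla F_*(s))/t=s/t$, since as written $\nabla F(\nabla F_*(s))$ is evaluated outside the interior of $K$.
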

\begin{proof}
Note that $F$ is logarithmically homogeneous. Therefore, we use the classical Fenchel inequality for $\tau x$ and $s$. We obtain the tightest bound with $\tau = \frac{\nu}{\langle x, s \rangle}$, which is the left-hand side of (\ref{eq-HFenTwo}).

Now we use the dual version of the left-hand side inequality (\ref{eq-HFenTwo}). For any $x \in \inter K$ and $s \in \inter K^*$, from the dual Fenchel inequality we have
\begin{eqnarray*}
\la \nabla F(x) , \nabla F_*(s) \ra \; &\geq& \; - F(-\nabla F_*(s)) - F_*(-\nabla F(x))\\
&\refEQ{eq-FDual}& \; - \la s, \nabla F_*(s) \ra + F_*(s) -  \la \nabla F(x), x \ra + F(x)\; \\
&\refEQ{eq-HomFX}& \; 2 \nu +F(x) + F_*(s).
\end{eqnarray*}

Replacing $x$ by $x/t$ with $t >0$ and minimizing the resulting inequality in $t$, we get the right-hand side of inequality (\ref{eq-HFenTwo}). 
As we used Fenchel inequalities, (\ref{eq-HFenTwo}) holds with equality if and only if $s$ and $\nabla F(x)$ are proportional.
\end{proof}

\section{Self-scaled barriers}\label{sc-Couple}
\SetEQ

The most important convex cones are {\em symmetric}. Recall that cone $K$ is symmetric if and only if it admits a normal {\em self-scaled barrier} \cite{NT1, NT2}.
\BD\label{def-SSB}
Function $F(\cdot)$ is called a $\nu$-self-scaled barrier for the proper cone $K$ if it is a $\nu$-normal barrier for $K$ and for all $x, w \in \inter K$ we have
\beq\label{eq-IncSSB}
\ba{rcl}
\nabla^2 F(w) x & \in & \inter K^*,\\
\\
F_*(\nabla^2 F(w) x) & = & F(x) - 2 F(w) - \nu. 
\ea
\eeq
\ED

It can be proved that for any $w \in \inter K$ we have $K^* = \nabla^2 F(w) K$. For any pair of points $x \in \inter K$, $s \in \inter K^*$ there exists a unique {\em scaling point} $w \in \inter K$ such that\beq\label{eq-ScaleSWX}
\ba{rcl}
s & = & \nabla^2 F(w)x.
\ea
\eeq
Moreover, the gradients and the Hessians at these points are related as follows:
\beq\label{eq-MapGH}
\ba{rcl}
\nabla F(x) & = & \nabla^2 F(w) \nabla F_*(s), \\
\\
\nabla^2 F(x) & = & \nabla^2 F(w) \nabla^2 F_*(s) \nabla^2 F(w).
\ea
\eeq
\BE\label{eq-SDP}
Let $K \equiv \S^n_+$ be the cone of positive-semidefinite $n \times n$-matrices. Then the following barrier is self-scaled \cite{NT1}:
\[
F(X) = \; - \ln \det X, \quad F_*(S) \; = \; - \ln \det S - n, \; X \in \inter K, \; S \in \inter K_* = \S^n_+.
\]
For two positive-definite matrices $S$ and $X$, the scaling point $W$ can be computed by the following expression:
\beq\label{eq-ScaleSDP}
\ba{rcl}
W & = & X^{1/2} [X^{1/2} S X^{1/2}]^{-1/2}X^{1/2}. \QF
\ea
\eeq
\EE

Note that the dual function $F_*(\cdot)$ is a self-scaled barrier for the cone $K^*$. Since the relation (\ref{eq-ScaleSWX}) can be rewritten in the dual form
\[
x  =  \nabla^2 F_*(w_*) s, \quad w_* \Def - \nabla F(w) \in \inter K^*,
\]
we have
\beq\label{eq-RepSSB}
\ba{rcl}
F([\nabla^2 F(w)]^{-1} s) & \refEQ{eq-FDual} & 
F(\nabla^2 F_*(w_*) s) \; = \; F_*(s) - 2 F_*(w_*) - \nu\\
\\
& \refEQI{eq-HomFX}{1} & F_*(s) + 2 F(w) + \nu.
\ea
\eeq

In the sequel, we need the following {\em Exchanging Rule} for self-scaled barriers.
\BL\label{lm-Change}
For all $x,u \in \inter K$, we have
\beq\label{eq-Change}
\ba{rcl}
\la \nabla^2 F(u) x, x \ra & = & \la \nabla F(u), [\nabla^2 F(x)]^{-1} \nabla F(u) \ra.
\ea
\eeq
Moreover,
\beq\label{eq-Change1}
\ba{rcl}
\nabla^2 F(u) x & = & - \half D^3F(x) \Big[ [\nabla^2 F(x)]^{-1} \nabla F(u) \Big]^2.
\ea
\eeq
\EL
\proof
Denote $s = - \nabla F(x) \in \inter K^*$. Let $s = \nabla^2 F(w) u$ . Therefore,
\[
\ba{rcl}
x & \refEQI{eq-FDual}{3} & - \nabla F_*(s) \; \refEQI{eq-MapGH}{1} \; - [\nabla^2 F(w)]^{-1} \nabla F(u),\\
\\
\left[\nabla^2 F(x)\right]^{-1} & \refEQI{eq-FDual}{6} &
\phantom{.}\nabla^2 F_*(s) \; \refEQI{eq-MapGH}{2} [\nabla^2 F(w)]^{-1} \nabla^2F(u) [\nabla^2 F(w)]^{-1}.
\ea
\]
Thus,
\[
\ba{rcl}
\la \nabla F(u), [\nabla^2 F(x)]^{-1} \nabla F(u) \ra & = & \la \nabla F(u), [\nabla^2 F(w)]^{-1} \nabla^2 F(u) [\nabla^2 F(w)]^{-1} \nabla F(u) \ra\\
\\
& = & \la \nabla^2 F(u) x, x \ra.
\ea
\]
Further, differentiating identity (\ref{eq-Change}) at point $x \in \inter K$ along direction $h \in \E$, we get
\[
\ba{rcl}
2 \la \nabla^2F(u)x, h \ra & = & - \la \nabla F(u), [\nabla^2 F(x)]^{-1} D^3 F(x)[h] [\nabla^2 F(x)]^{-1} \nabla F(u) \ra\\
\\
& \refEQI{def-D3}{} & - D^3F(x)\Big[h, [\nabla^2 F(x)]^{-1} \nabla F(u), [\nabla^2 F(x)]^{-1} \nabla F(u) \Big],
\ea
\]
and this is (\ref{eq-Change1}).
$\QR$

For self-scaled barriers, we need to use several facts related to its third derivative. Firstly, this is
the {\em Expanding Property} of Dikin's ellipsoids. In terms of the third derivative, it can be written as follows (see Corollary 3.2 in \cite{NT1}):
\[
D^3F(x)[h]  \preceq  0, \quad x \in \inter K, \; h \in K.
\]
In other words, $0 \geq \la D^3F(x)[h]u, u \ra \refEQI{def-D3}{} \la D^3F(x)[u]^2, h \ra$ for all $u \in \E$ and $h \in K$. This means that for any $u \in \E$, we have
\beq\label{eq-Expand1}
\ba{rcl}
D^3F(x)[u]^2 & \in & - K^*.
\ea
\eeq


In the next section we present the new notion of hyperbolic coupling.

\section{Hyperbolic coupling}\label{coupling}

In our approach, we define a new family of central paths by joining the elements of the primal-dual pair $(x,s) \in \inter (K \times K^*)$ and introducing an additional control variable $v \in \R$.  We will denote by $z=(x,s,v)$ the triple obtained in this form.

\BD\label{def-Couple}
For the symmetric cone $K \subset \E$, the cone 
\[
{\cal C}(K) \; = \; \{z= (x,s,v) \in K \times K^* \times \R: \; x + v^2 \nabla F_*(s) \in K \}
\]
is called the {\em hyperbolic coupling} of the primal-dual cone $K \times K^*$.
\ED

Furthermore, let us introduce the following notations
\begin{equation*}{}\label{xzsz}
x(z)\; = \; x + v^2 \nabla F_*(s)\quad \text{ and }\quad
s(z) \; = \; s + v^2 \nabla F(x).
\end{equation*}

We can show that the unique scaling point $w$ of $x,s \in \inter K \times \inter K^*$ is the same as that of $x(z) \in K$ and $s(z) \in K^*$.
Indeed, if $s = \nabla^2 F(w) x$, then by (\ref{eq-MapGH})$_1$ we have
\begin{eqnarray}\label{couplingremark}
s(z)\; &=& \; s + v^2 \nabla F(x)  =  \nabla^2 F(w)x + v^2 \nabla^2 F(w) \nabla F_*(s) \; \nonumber\medskip\\
&=& \; \nabla^2 F(w)(x + v^2\nabla F_*(s)) = \nabla^2 F(w)x(z).
\end{eqnarray}

Hence, we obtain the same definition of the hyperbolic coupling from the dual side.

\begin{lemma}{}
For symmetric cone $K \subset \mathbb{E}$, we have
\[
{\cal C}(K) \; = \;  \{ z=(x,s,v) \in K \times K^* \times \R: \; s(z) = s + v^2 \nabla F(x) \in K^* \}. 
\]
\end{lemma}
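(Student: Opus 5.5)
The plan is to prove the two inclusions at once. Using the identity (\ref{couplingremark}) and the fact that $\nabla^2 F(w)$ maps $K$ onto $K^*$, I will show that, for $x\in\inter K$ and $s\in\inter K^*$, the conditions $x(z)\in K$ and $s(z)\in K^*$ are equivalent. The only extra work is the boundary of $K\times K^*$, where $\nabla F(x)$ or $\nabla F_*(s)$ is undefined.

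The main case is $x \in \inter K$, $s \in \inter K^*$. Let $w \in \inter K$ be the unique scaling point with $s = \nabla^2 F(w) x$, as in (\ref{eq-ScaleSWX}). By (\ref{couplingremark}) we have $s(z) = \nabla^2 F(w)\, x(z)$, which uses (\ref{eq-MapGH})$_1$. The text notes that $K^* = \nabla^2 F(w) K$ for every $w \in \inter K$. Since $\nabla^2 F(w)$ is positive definite, it is an invertible linear map $\E \to \E^*$, and so it is a bijection between $K$ and $K^*$: a point $y \in \E$ lies in $K$ if and only if $\nabla^2 F(w) y$ lies in $K^*$. Applying this with $y = x(z)$ gives
\[
x + v^2 \nabla F_*(s) \in K \quad \Longleftrightarrow \quad s + v^2 \nabla F(x) \in K^*,
\]
so the two descriptions of ${\cal C}(K)$ agree on $\inter K \times \inter K^* \times \R$.

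The step I expect to be the real obstacle is the boundary, because $\nabla F(x)$ and $\nabla F_*(s)$ exist only at interior points. I would treat both sets as defined by closure, and this is the convention I would fix. Under that convention, $x(z)$ and $s(z)$ are only meaningful on $\inter K\times\inter K^*\times\R$, and ${\cal C}(K)$ is the closure of the set of interior triples satisfying the condition. Equality of the two sets on interior triples then immediately gives equality of their closures.

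If a pointwise boundary statement is wanted instead, I would handle the degenerate cases through the behaviour of the barriers. For $v = 0$ both conditions reduce to $x\in K$, $s\in K^*$, which holds trivially. For $v\neq 0$ we have $\|\nabla F_*(s)\|\to\infty$ as $s\to\partial K^*$, which forces the condition to fail, and symmetrically on the dual side. So with either convention the two descriptions of ${\cal C}(K)$ coincide.
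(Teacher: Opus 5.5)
Your main-case argument is exactly the paper's: for the scaling point $w$ of $(x,s)\in\inter K\times\inter K^*$, identity (\ref{couplingremark}) gives $s(z)=\nabla^2F(w)\,x(z)$. Together with $\nabla^2F(w)K=K^*$ and the injectivity of $\nabla^2F(w)$ (which you state explicitly and the paper leaves implicit), this yields $x(z)\in K\Leftrightarrow s(z)\in K^*$.

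The paper does not discuss boundary points, so your closure convention is an extra. Your pointwise norm-blow-up remark is only heuristic. To make it rigorous, use a supporting hyperplane: for $s\in\partial K^*$ pick $x_0\in K\setminus\{0\}$ with $\langle s,x_0\rangle=0$. Then $\langle s+v^2\nabla F(x),x_0\rangle=v^2\langle\nabla F(x),x_0\rangle<0$ for $v\neq0$, and symmetrically on the primal side.
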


\BE\label{ex-CK}
Let $K = \R_+$ and $F(x) = - \ln x$. Then 
\[{\cal C}(K) \; = \; \{ z=(x,s,v) \in \R^2_+ \times \R: \; xs \geq v^2 \}. \]
This construction explains our terminology. 
$\QR$  \EE

Let us prove that ${\cal C}(K)$ is a convex cone. For that, we need the following lemma.
\BL\label{lm-G}
For $\hat s = (s,v)$, the 
mapping $G(\hat s) = v^2 \nabla F_*(s): \inter K^* \times \R \to \E$ is concave with respect to $K$:
\beq\label{eq-GConc}
\ba{rcl}
D^2 G(\hat s)[h]^2 & \preceq_{K} & 0, \quad \forall h = (h_s,\tau) \in \E^* \times R.
\ea
\eeq
Consequently, for any $\alpha \in [0,1]$, we have
\beq\label{eq-GConc0}
\ba{rcl}
G(\alpha \hat s_1 + (1-\alpha) \hat s_0 ) & \succeq_K & \alpha G(\hat s_1) + (1-\alpha) G(\hat s_0).
\ea
\eeq
\EL

\proof
Indeed, for arbitrary $h = (h_s,\tau) \in \E^* \times R$, we have
\[
\ba{rcl}
DG(s,v)[h] & = & 2 v \tau \nabla F_*(s) + v^2 \nabla^2 F_*(s) h_s,\\
\\
DG^2(s,v)[h]^2 & = & 2 \tau^2 \nabla F_*(s) + 4 v \tau \nabla^2  F_*(s) h_s + v^2 \nabla^3 F_*(s) [h_s]^2\\
&\refEQ{eq-Hom1} & \nabla^3 F_*(s)[ v h_s - \tau s]^2 \; \stackrel{(\ref{eq-Expand1})}{\preceq}_K \; 0.
\ea
\]
Further, denote $h = \hat s_1 - \hat s_0$ and $\hat s_{\alpha} = \hat s_0 + \alpha h$, $\alpha \in [0,1]$. Then, by Taylor formula,
\[
G(\hat s_1) - G(\hat s_0) - DG(\hat s_0)[h] \; = \; \int\limits_0^1 (1-\tau) D^2G(\hat s_{\alpha}))[h]^2 d \alpha \; \stackrel{(\ref{eq-GConc})}{\preceq}_K \; 0.
\]
Therefore, for any $\alpha \in [0,1]$, we have
\[
\ba{rcl}
G(\hat s_1) & \; {\preceq}_K & G(\hat s_{\alpha}) + D G(\hat s_{\alpha}) [\hat s_1 - \hat s_{\alpha}] \; = \; G(\hat s_{\alpha}) + (1-\alpha)DG(\hat s_{\alpha}) [h],\\ \\
G(\hat s_0) & \; {\preceq}_K &  G(\hat s_{\alpha}) + D G(\hat s_{\alpha}) [\hat s_0 - \hat s_{\alpha}] \; = \; G(\hat s_{\alpha}) -\alpha G(\hat s_{\alpha}) [h].
\ea 
\]
Multiplying the first inequality by $\alpha$ and the second one by $1-\alpha$ and adding the results, we get (\ref{eq-GConc0}).
$\QR$

\BC\label{cor-CK}
The cone ${\cal C}(K)$ is convex.
\EC
\proof
Denote $\hat s_i = (s_i,v_i)$ and $\hat z_i = (x_i, \hat s_i)$, $i=0,1$. Let the points $\hat z_i \in {\cal C}(K)$, $i =0,1$.
Denote $\hat z_{\alpha} = \alpha \hat z_1 + (1-\alpha) \hat z_0$.
Then, for any $\alpha \in [0,1]$, we have
\[
x_{\alpha} + G(\hat s_{\alpha}) \;  \stackrel{(\ref{eq-GConc0})}{\succeq}_K \;  \alpha (x_1 +  G(\hat s_1)) + (1-\alpha)(x_0 + G(\hat s_0)) \; \succeq_K \; 0. \QF
\]

In order to use the cone ${\cal C}(K)$ in interior-point methods, we need to endow it with a self-concordant barrier. The most natural candidate for this role is as follows:
\beq\label{def-Phi}
\Phi(z) \; = \; \Phi(x,s,v)  \; = \; F(x + v^2 \nabla F_*(s)) + F_*(s)\; = \; F(x(z)) + F_*(s).
\eeq

\BT\label{th-CK}
Function $\Phi(\cdot,\cdot,\cdot)$ is a $2\nu$-logarithmically homogeneous at the cone ${\cal C}(K)$. It has the following dual representation:
\[
\Phi(z) \; = \; \Phi(x,s,v)  =  F_*(s + v^2 \nabla F(x)) + F(x) \; = \; F_*(s(z))+F(x).
\]
\ET
\proof
Indeed, for the primal-dual pair $(x,s) \in \inter(K \times K^*)$ there exists a scaling point $w \in \inter K$ such that $s = \nabla^2 F(w) x$. Then, using (\ref{couplingremark}) we obtain
\begin{eqnarray*}{}
F_*(s(z)) + F(x)&=& F_*\Big(\nabla^2 F(w)x(z)\Big) + F(x)\nonumber\\
& \refEQ{eq-IncSSB} & F(x(z)) - 2 F(w) - \nu +F\Big([\nabla^2F(w)]^{-1}s\Big)\nonumber\\
& \refEQ{eq-RepSSB} & F(x(z))+ F_*(s).
\end{eqnarray*}

Further, for any $\tau>0$, from the first identity of (\ref{eq-Hom12}) we have
$x(\tau z)=\tau x(z),$
hence using (\ref{def-HomB}) we get
\[
\Phi(\tau z)= F(\tau x(z))+F_*(\tau s)=\Phi(z) - 2 \nu \ln \tau.
\]
Thus, the degree of logarithmic homogeneity of this function is equal to $2\nu$.
$\QR$

\BE\label{eq-SDP}
Let $K = \S_n^+$. Then,
\[
\ba{rcl}
\Phi(X,S,v) & = & - \ln \det (X - v^2 S^{-1}) - \ln \det S - n \nonumber\\
\; &=&  \; - \ln \det \left( \ba{cc} X & v I_n \\ v I_n & S \ea \right) - n.
\ea
\]
Clearly, this is a self-concordant function with parameter $2n$. 
$\QR$
\EE

In the following example we give an explicit form of the barrier (\ref{def-Phi}) for the Hyperbolic Coupling of the Lorentz cone.
\BE\label{ex-Lor}
Let us look at the Lorentz cone ${\cal L}_n = \{ x = (x_0,x_1) \in \R \times \R^{n}: x_0 \geq \| x_1 \| \}$. It admits the following self-concordant barrier:
\[
\ba{rcl}
F(x) & = & - \ln(x_0^2 - \| x_1 \|^2), \quad F_*(s) \; = \; - \ln(s_0^2 - \| s_1 \|^2) - 2 + 2 \ln 2.
\ea
\]
Denote $\omega_x = x_0^2 - \| x_1 \|^2$ and $\omega_s = s_0^2 - \| s_1 \|^2$. Then $\nabla F_*(s) = {2 \over \omega_s} \left(\ba{c} - s_0 \\ s_1 \ea\right)$, and
\[
\ba{rcl}
\left( \ba{cc} x_0 \\ x_1 \ea \right) + v^2 \nabla F_*(s) & = & {1 \over \omega_s} \left( \ba{cc}  \omega_s x_0 - 2 v^2 s_0 \\ \omega_s x_1 + 2 v^2 s_1 \ea \right).
\ea
\]
Hence,
\[
\ba{rcl}
\Phi(x,s,v) & = & - \ln \Big( \underbrace{(\omega_s x_0 - 2 v^2 s_0 )^2 - \| \omega_s x_1 + 2 v^2 s_1 \|^2}_{\equiv r} \Big) + \ln \omega_s - 2 + 2 \ln 2.
\ea
\]
Note that
\[
\ba{rcl}
r & = & \omega_s^2 x_0^2 - 4 v^2 \omega_s x_0 s_0+ 4 v^4s_0^2 - \omega_s^2 \| x_1 \|^2 - 4 v^4 \| s_1 \|^2 - 4 v^2 \omega_s \la x_1, s_1 \ra \\
\\
& = & \omega_s \Big[ \omega_x \omega_s - 4 v^2(x_0 s_0 + \la x_1, s_1 \ra) + 4 v^4 \Big].
\ea
\]
Thus, for Lorentz cone, we have
\[\Phi(x,s,v)  =  - \ln \Big[ \underbrace{\omega_x \omega_s - 4 v^2( x_0 s_0 + \la x_1, s_1 \ra) + 4 v^4}_{\equiv \delta(x,s,v)} \Big] - 2 + 2 \ln 2.
\]
Note that
\begin{eqnarray*}\label{eq-DLT}
\delta(x,s,v) & = & (x_0 s_0 + \la x_1, s_1 \ra - 2 v^2 )^2 - \| s_0 x_1 + x_0 s_1 \|^2 \\
&  + &\| x_1 \|^2 \, \| s_1 \|^2 - \la x_1, s_1 \ra^2.
\end{eqnarray*}
Note that if $n=1$, then $\delta(x,s,v)$ is the difference of two squares, hence $\Phi(x,s,v)$ is a self-concordant function. 
$\QR$ \EE

It is not straightforward that $\Phi$ is self-concordant in general. We can prove this property under an additional assumption. 

\BAS\label{ass-Comp}
For any $x \in \inter K$, $h_x \in \E$, and $\alpha, \beta \in \R$, we have
\beq\label{eq-Comp}
\ba{rcl}
D^4 F(x)[h_x][\alpha x + \beta h_x ]^2 & \preceq_{K^*} & - 3 D^3 F(x)[\alpha x + \beta h_x ]^2 \cdot \| h_x \|_x.
\ea
\eeq
\EAS

If cone $K$ is symmetric, then verification of Assumption \ref{ass-Comp} is easier.
\BL\label{lm-Comp}
Let the barrier $F(\cdot´)$ be self-scaled. Then Assumption \ref{ass-Comp} is valid if and only if there exists a point
$\bar x \in \inter K$ such that for all $h_x \in \E$, and $\alpha, \beta \in \R$, we have
\beq\label{eq-Comp1}
\ba{rcl}
D^4 F(\bar x)[h_x][\alpha \bar x + \beta h_x ]^2 & \preceq_{K^*} & - 3 D^3 F(\bar x)[\alpha \bar x + \beta h_x ]^2 \cdot \| h_x \|_{\bar x}.
\ea
\eeq
\EL
\proof
First of all, let us show that the relation (\ref{eq-Comp}) is valid is and only if there exists $\bar s \in \inter K^*$ such that
\beq\label{eq-Comp2}
\ba{rcl}
D^4 F_*(\bar s)[h_s][\alpha \bar s + \beta h_s ]^2 & \preceq_{K} & - 3 D^3 F_*(\bar s)[\alpha \bar s + \beta h_s ]^2 \cdot \| h_s \|_{\bar s}.
\ea
\eeq
for all $h_s \in \E^*$ and $\alpha, \beta \in \R$. Indeed, for any $x \in \inter K$ and $\bar s \in \inter K^*$ there exist $w \in \inter K$ such that $\bar s = \nabla ^2 F(w) x$. Let us choose an arbitrary direction $p \in K$. Then, by differentiation identity (\ref{eq-IncSSB}) along this direction, we get
\[
\ba{rcl}
\la \nabla F_*(\nabla ^2 F(w) x), \nabla ^2 F(w)  p \ra & \equiv & \la \nabla F(x), p \ra.
\ea
\]
Further, by differentiating this identity two times along direction $q = \alpha x + \beta h_x$, we get
\[
\ba{rcl}
\la D^3 F_*(\nabla ^2 F(w) x) [\nabla ^2 F(w)   q]^2, \nabla ^2 F(w)  p \ra & \equiv & \la D^3 F(x)[q]^2, p \ra.
\ea
\]
Differentiating this identity again along direction $h_x$, we get
\beq\label{eq-Replace}
\la D^4 F_*(\nabla ^2 F(w) x)  [\nabla ^2 F(w)  h_x] [\nabla ^2 F(w)   q]^2, \nabla ^2 F(w)  p \ra  \equiv  \la D^4 F(x))[h_x][q]^2, p \ra.
\eeq

Denote by $h_s = \nabla ^2 F(w) h_x$. Then $\nabla ^2 F(w)  q = \alpha \bar s + \beta h_s$, and by Assumption \ref{ass-Comp}, we have
\[
\ba{cl}
 0 \geq & \la D^4 F(x)[h_x][q ]^2 + 3 D^3 F(x)[q ]^2 \cdot \| h_x \|_x, p \ra\\
\\
\refEQ{eq-Replace} & \la D^4 F_*(\bar s)  [h_s] [ \alpha \bar s + \beta h_s]^2 + 3  D^3 F_*(\bar s) [\alpha \bar s + \beta h_s]^2 \cdot \| h_x \|_x, \nabla ^2 F(w)  p \ra.
\ea
\]
Note that
\[
\ba{rcl}
\| h_x \|^2_x & = & \la \nabla^2 F(x) h_x, h_x \ra \; \refEQI{eq-MapGH}{2}  \; \| h_s \|^2_{\bar s}.
\ea
\]
Since $p$ can be any vector from cone $K$, which has nonempty interior, and $\nabla ^2 F(w) K = K^*$, we get the relation (\ref{eq-Comp2}). Repeating our reasoning again with the relation (\ref{eq-Comp2}), we get the relation (\ref{eq-Comp1}).
$\QR$

\begin{corollary}{}\label{beta}
Let the barrier $F(\cdot)$ be self-scaled. Then Assumption \ref{ass-Comp} is valid if and only if there exists a point
$\bar x \in \inter K$ such that for all $h_x \in \E$, and $\alpha \in \R$, we have
    \begin{equation}{}\label{beta1}
    \nabla^2 F(\bar x)h_x \; \preceq_{K^*} \; -\nabla F(\bar x) \|h_x\|_{\bar x}.
    \end{equation}
    and\begin{equation}{}\label{beta2}
    D^4 F(\bar x)[h_x][\alpha \bar x +  h_x ]^2  \; \preceq_{K^*} \; - 3 D^3 F(\bar x)[\alpha \bar x + h_x ]^2 \cdot \| h_x \|_{\bar x}.
    \end{equation}
\end{corollary}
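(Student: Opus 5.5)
The plan is to reduce the statement to Lemma \ref{lm-Comp}. That lemma says Assumption \ref{ass-Comp} holds if and only if there is a point $\bar x \in \inter K$ for which (\ref{eq-Comp1}) holds for all $h_x \in \E$ and all $\alpha,\beta \in \R$. So it suffices to show that, for a fixed $\bar x$, the family (\ref{eq-Comp1}) over all $(\alpha,\beta)$ is equivalent to the pair (\ref{beta1}), (\ref{beta2}). I will split the parameter pairs into the case $\beta \neq 0$ and the case $\beta = 0$.

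\emph{Case $\beta \neq 0$.} Put $q = \alpha \bar x + \beta h_x = \beta(\gamma \bar x + h_x)$ with $\gamma = \alpha/\beta$. Both sides of (\ref{eq-Comp1}) are quadratic forms in $q$, while $h_x$ is kept fixed in $D^4F(\bar x)[h_x]$ and in $\|h_x\|_{\bar x}$. Hence both sides are multiplied by the same factor $\beta^2 > 0$, and the ordering $\preceq_{K^*}$ is preserved under multiplication by positive scalars. Therefore (\ref{eq-Comp1}) for this pair is equivalent to (\ref{beta2}) with $\alpha$ replaced by $\gamma$. Since $\gamma$ ranges over all of $\R$, the whole family with $\beta \neq 0$ is equivalent to (\ref{beta2}) for all $\alpha$.

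\emph{Case $\beta = 0$.} Here $q = \alpha \bar x$, and the homogeneity identities give explicit expressions for both sides. By (\ref{eq-Hom2}) and the symmetry of $D^4F(\bar x)$ in its arguments,
\[
D^4F(\bar x)[h_x][\bar x]^2 \;=\; D^4F(\bar x)[\bar x,\bar x]\,h_x \;=\; 6\nabla^2F(\bar x)h_x .
\]
By (\ref{eq-Hom1}),
\[
D^3F(\bar x)[\bar x]^2 \;=\; -2\nabla^2F(\bar x)\bar x \;=\; 2\nabla F(\bar x).
\]
Substituting these, (\ref{eq-Comp1}) becomes
\[
6\alpha^2\nabla^2F(\bar x)h_x \;\preceq_{K^*}\; -6\alpha^2\nabla F(\bar x)\,\|h_x\|_{\bar x}.
\]
For $\alpha = 0$ this reads $0 \preceq_{K^*} 0$, which is trivial. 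For $\alpha \neq 0$ we divide by $6\alpha^2 > 0$ and obtain exactly (\ref{beta1}).

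Combining the two cases, (\ref{eq-Comp1}) for all $\alpha,\beta$ holds if and only if (\ref{beta1}) and (\ref{beta2}) both hold, and Lemma \ref{lm-Comp} then gives the corollary. The only delicate point is justifying the identity $D^4F(\bar x)[h_x][\bar x]^2 = 6\nabla^2F(\bar x)h_x$. This follows from (\ref{eq-Hom2}) by the total symmetry of the fourth derivative, together with the convention (\ref{def-D3}) of viewing the remaining slot as an element of $\E^*$.
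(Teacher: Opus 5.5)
Your proposal is correct and follows the paper's proof: you reduce to Lemma \ref{lm-Comp} and split into $\beta = 0$, where the homogeneity identities (\ref{eq-Hom1})--(\ref{eq-Hom2}) turn (\ref{eq-Comp1}) into (\ref{beta1}), and $\beta \neq 0$, where rescaling by $\beta^2>0$ reduces to $\beta = 1$, i.e.\ (\ref{beta2}). Where the paper leaves them implicit, you write out the computations $D^4F(\bar x)[h_x][\bar x]^2 = 6\nabla^2F(\bar x)h_x$ and $D^3F(\bar x)[\bar x]^2 = 2\nabla F(\bar x)$.
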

\proof We have two cases, $\beta =0$ and $\beta \neq 0$. If $\beta = 0$, then using (\ref{eq-Hom12})-(\ref{eq-Hom2}) relation (\ref{eq-Comp1}) simplifies to the first relation of the corollary.  For $\beta \neq 0$ it is enough to check $\beta=1$, which is the second relation of the corollary. 
$\QR$

Assumption \ref{ass-Comp}   is crucial for proving self-concordance of function $\Phi(\cdot)$. Before showing this, let us check how it works for the two most important cones.
\BE\label{ex-SDP2}

Let us show that the relation (\ref{eq-Comp1}) is valid for the SDP case.

Denote $F(X) = - \ln \det X$. Then, for $X \succ 0$ and $H_X, Q \in \S_n$, we have 
\begin{align*}
D^3F(X)[Q]^2 & =  - 2 X^{-1}  Q X^{-1}  Q X^{-1} , \smallskip \\
D^4F(X)[H_X][Q]^2 & =  2 X^{-1}  H_X X^{-1}  Q X^{-1}  Q X^{-1}  + 2 X^{-1}  Q X^{-1}  H_X X^{-1}  Q X^{-1} \smallskip\\
&   + 2 X^{-1}  Q X^{-1}  Q X^{-1}   H_X X^{-1}.
\end{align*} 
In view of the result of Lemma \ref{lm-Comp}, we can choose $\bar X = I$. Then $Q = \alpha I + \beta H_X$ and this matrix commutes with $H_X$.
Therefore, 
\begin{equation*}
D^3F(\bar X)[Q]^2  = - 2  Q^2, \quad D^4F(\bar X)[H_X][Q]^2 \; = \; 6 Q H_X Q,
\end{equation*}
and we have \[D^4F(\bar X)[H_X][Q]^2  \preceq 6 \| H_X \|   Q^2 = -3 D^3F(\bar X)[Q]^2 \cdot \| H_X \|_{\bar X},\]
where $\|H_X\|$ denotes the spectral norm of $H_X$.
$\QR$
\EE

For the Lorenz cone, the reasoning is much more involved.
\BE\label{th-Lor}
Let ${\cal L}_n = \{ x = (x_0,x_1) \in \R \times \R^n: \; x_0 \geq \| x_1 \| \}$ be the Lorentz cone. Then, its self-scaled barrier is $F(x) = - \ln \omega_x$ with $\omega_x = x_0^2 - \| x_1 \|^2$.

Let us fix a point $y \in {\cal L}_n$ and consider the function 
\[
\la \nabla F(x), y \ra \; = \;  - {2 \over \omega_x} \Big[ x_0 y_0 - \la x_1, y_1 \ra \Big] = -{1 \over \omega_x} \big\langle \omega'_x, y \big\rangle.
\]
We need to compute its directional derivatives at point $x = \bar x$ along the direction $q$. Then,
\[
\la \nabla^2 F(\bar x) q, y \ra =  {1 \over \omega_{\bar x}^2}  \big\langle  \omega_{\bar x}',y  \big\rangle  \big\langle  \omega_{\bar x}',q  \big\rangle   - {1 \over \omega_{\bar x}} \big\langle \omega'_q, y \big\rangle,
\]
\[
\la D^3 F(\bar x) [q]^2, y \ra = - {2 \over \omega_{\bar x}^3}   \langle \omega_{\bar x}', y \rangle  \langle \omega_{\bar x}', q \rangle^2 + {2 \over \omega_{\bar x}^2}   \langle \omega_q', y \rangle \langle \omega_{\bar x}', q \rangle   +  {2 \over \omega_{\bar x}^2}   \langle \omega_{\bar x}', y \rangle  \omega_q.
\]
Now, we need to differentiate the equation this along direction $h$:
\begin{align*}
\la D^4 F(\bar x) [h][q]^2, y \ra &= {6 \over \omega_{\bar x}^4} \big\langle \omega_{\bar x}', h \big\rangle \big\langle \omega_{\bar x}', y \big\rangle \big\langle \omega_{\bar x}', q \big\rangle^2    
- {2 \over \omega_{\bar x}^3} \big\langle \omega_h', y \big\rangle \big\langle \omega_{\bar x}', q \big\rangle^2  \\
&- {4 \over \omega_{\bar x}^3} \big\langle \omega_{\bar x}', y \big\rangle 
\big\langle \omega_{\bar x}', q \big\rangle \big\langle \omega_h', q \big\rangle 
- {4 \over \omega_{\bar x}^3}  \big\langle \omega_{\bar x}', h \big\rangle 
\big\langle \omega_q', y \big\rangle \big\langle \omega_{\bar x}', q \big\rangle \\ 
& + {2 \over \omega_{\bar x}^2}  \big\langle \omega_q', y \big\rangle 
\big\langle \omega_h', q \big\rangle -{4 \over \omega_{\bar x}^3}  \big\langle \omega_{\bar x}', h \big\rangle \big\langle \omega_{\bar x}', y \big\rangle\, \omega_q +  {2 \over \omega_{\bar x}^2} \big\langle \omega_h', y \big\rangle\,  \omega_q.
\end{align*}

Based on Corollary \ref{beta} we need to show inequalities (\ref{beta1}) and (\ref{beta2}) in this case. 
Let us use now our freedom in the choice $\bar x$. Namely, we choose $\bar x = (1,0)$. Then, $\omega_{\bar x} =1$ and $\omega_{\bar x}'=(2,0)$. Therefore, we have the following simplifications:
\[\langle \nabla^2 F(\bar x) q, y \rangle = 2 \langle q, y \rangle, \quad   -\langle \nabla F(\bar x), y \rangle = 2 y_0, \quad \|q\|_{\bar x}= \sqrt{2} \|q\|.
\]
From here, we obtain that (\ref{beta1}) of Corollary \ref{beta} holds. For the other inequality, we need to check the higher derivatives for $q=\alpha \bar x + h$, namely $q_1=h_1$, where $h =(h_0,h_1) \in \R \times \R^n$.

We have
\begin{eqnarray*}{}
\la D^3 F(\bar x) [q]^2, y \ra & = & - 16 y_0  q_0^2 +  8  \Big[ q_0 y_0 - \la h_1, y_1 \ra \Big]  q_0   + 4  y_0  \Big[ q^2_0  - \| h_1 \|^2 \Big]\\
\\
 &=&  -4 \Big[  (q_0^2 + \| h_1 \|^2)  y_0 + 2 q_0 \la h_1, y_1 \ra \Big] \; \Def \; - 4 \la b, y \ra,
\end{eqnarray*}
and 
\begin{align*}
\la D^4 F(\bar x) &[h][q]^2, y \ra = 96 h_0  q_0^2 y_0
- 16   q_0^2 \Big[ h_0 y_0 - \la h_1, y_1 \ra \Big]
- 32 q_0  \Big[ h_0 q_0  -  \| h_1 \|^2 \Big]  y_0  \\
 &- 32   h_0 q_0  \Big[ q_0 y_0 - \la h_1, y_1 \ra \Big] 
+ 8  \Big[ h_0 q_0  - \| h_1 \|^2  \Big] \Big[ q_0 y_0 - \la h_1, y_1 \ra \Big]  \\
 &-16  h_0  \Big[ q^2_0  - \| h_1 \|^2 \Big] y_0  
+  4   \Big[ q^2_0  - \| h_1 \|^2 \Big] \Big[ h_0 y_0 - \la h_1, y_1 \ra \Big]  \; \Def \; 12 \langle c, y \rangle,
\end{align*}
where 
\[
c_0  \; = \; h_0  q_0^2  +2 q_0 \| h_1 \|^2  +h_0  \| h_1 \|^2, \qquad c_1  \; =  \;  \left(q_0^2  + 2   h_0 q_0 +\| h_1 \|^2\right)h_1.
\]
Denote by
\[r^2  =  \displaystyle\| h\|^2_{\bar x} = \la \nabla^2 F(\bar x) h, h \ra = {1 \over \omega_{\bar x}^2}
\big \langle \omega_{\bar x}', h \big \rangle^2 
 - {1 \over \omega_{\bar x}} \omega_h = 2 (h_0^2 + \| h_1 \|^2).
\]
Since $y$ is an arbitrary vector from the Lorentz cone, for proving relation (\ref{eq-Comp1}) it is enough to show that $ - 3 r D^3F(\bar x)[q]^2 - D^4F(\bar x) [h][q]^2 \in {\cal L}_n$. In other words, we need to justify the inclusion $r b -c \in {\cal L}_n$. It is equivalent to the following inequality
\beq\label{eq-Mod}
r (q_0^2 + \| h_1 \|^2) - h_0  q_0^2  -2 q_0 \| h_1 \|^2  -h_0  \| h_1 \|^2  
\geq \Big| 
2 r q_0 - q_0^2  - 2   h_0 q_0 -\| h_1 \|^2 \Big| \cdot \| h_1 \|.
\eeq
We need to prove that with $r = \sqrt{2 (h_0^2 + \| h_1 \|^2)}$ this inequality is valid for all values of $h_0$, $q_0$, and $h_1 \in \R^n$.

In fact, inequality (\ref{eq-Mod}) contains two linear inequalities. The first one is
\begin{eqnarray*}\label{eq-Mod1}
0 & \leq & r (q_0^2 + \| h_1 \|^2) - h_0  q_0^2  -2 q_0 \| h_1 \|^2  -h_0  \| h_1 \|^2 
\nonumber\\
& -& \Big( 2 r q_0 - q_0^2  - 2   h_0 q_0 -\| h_1 \|^2 \Big) \cdot \| h_1 \| =  (r - h_0 + \| h_1 \|)(q_0 - \| h_1 \|)^2.
\end{eqnarray*}And the second one is 
\begin{eqnarray*}\label{eq-Mod2}
0 & \leq & r (q_0^2 + \| h_1 \|^2) - h_0  q_0^2  -2 q_0 \| h_1 \|^2  -h_0  \| h_1 \|^2 \nonumber\\
&  + &\Big( 2 r q_0 - q_0^2  - 2   h_0 q_0 -\| h_1 \|^2 \Big) \cdot \| h_1 \| (r - h_0 - \| h_1 \|)(q_0 + \| h_1 \|)^2.
\end{eqnarray*}
Both inequalities are valid since $r \geq |h_0| + \| h_1 \|$.

Hence, both inequalities of Corollary \ref{beta} hold. Therefore, Assumption \ref{ass-Comp} holds for the Lorentz cone case.
$\QR$
\EE

Let us prove now the main theorem of this section.
\BT\label{th-Main}
Let the self-scaled barrier $F(\cdot)$ for the cone $K$ satisfy Assumption \ref{ass-Comp}. Then function $\Phi(\cdot,\cdot,\cdot)$ is a self-concordant barrier for the cone ${\cal C}(K)$ with parameter $2\nu$.
\ET
\proof
For further investigation of the properties of the function $\Phi(\cdot)$, it is convenient to represent it in the following form:  
\[
\ba{rcl}
\Phi(z) & = & \Psi(z) + F(x), \quad \Psi(z) \; \Def \; F_*(s(z)),\\
\ea
\]

Let us compute derivatives of this function along direction $\hat h = (h_x,h_s,h_v)$. For the first derivative of $\Psi(\cdot)$, we have
\beq\label{eq-DPsi1}
\ba{rcl}
D\Psi(z)[\hat h] & = & \la \nabla F_*(s(z)), \omega'  \ra, \\
\\
\sigma' & \Def & Ds(z)[\hat h ] \; = \; h_s + 2 v h_v \nabla F(x) + v^2 \nabla^2 F(x) h_x.
\ea
\eeq
For the second derivative of this function, we have
\beq\label{eq-DPsi2}
\ba{rcl}
D^2\Psi(z)[\hat h]^2 & \refEQI{eq-DPsi1}{1}  & \la \nabla^2 F_*(s(z))  \sigma' , \sigma'  \ra + \la \nabla F_*(s(z))  \sigma'' \ra, \\
\ea
\eeq
where
\begin{equation}\label{eq-DPsi2b}
\sigma''  \Def  D\sigma'[\hat h ] \; = 2 h_v^2 \nabla F(x) + 4 v h_v \nabla^2 F(x) h_x + v^2 D^3 F(x) [h_x]^2.
\end{equation}
Note that
\begin{eqnarray}\label{eq-FOmega2}
D^3F(x) [h_v x - v h_x ]^2 &=& h_v^2 D^3F(x) [x]^2 - 2 v h_v D^3F(x) [x, h_x ] + v^2 D^3F(x) [h_x]^2 \nonumber\\
&\refEQ{eq-Hom2}&  2 h_v^2 \nabla F(x) + 4 v h_v \nabla^2 F(x) h_x + v^2 D^3 F(x) [h_x]^2 \; \refEQI{eq-DPsi2b}\; \sigma''.
\end{eqnarray}
In view of relation (\ref{eq-Expand1}), this means that $\sigma'' \in - K^*$. Consequently, by Theorem on Recession Direction, we have
\beq\label{eq-RDir}
\ba{rcl}
\la \nabla^2 F_*(s(z)) \sigma'', \sigma'' \ra^{1/2} & \leq &  \la \nabla F_*(s(z)), \sigma''  \ra.
\ea
\eeq
In paricular, this implies that function $\Psi(\cdot)$ is convex.

Finally, for the third derivative of function $\Psi(\cdot)$, we have
\begin{equation}\label{eq-DPsi3}
D^3 \Psi(z)[\hat h]^2  \refEQI{eq-DPsi2}{1} D^3 F_*(s(z)) [\sigma']^3 + 3\la \nabla^2 F_*(s(z)) \sigma', \sigma''  \ra + \la \nabla F_*(s(z)), \sigma''' \ra.
\end{equation}
Since $D\Big(h_v x - v h_x \Big)[\hat h]=0$, we have
\[
\ba{rcl}
\sigma''' & \refEQ{eq-FOmega2} &    D^4 F(x)[h_x][h_v x - v h_x ]^2.
\ea
\]
\noindent Let us come back to the function $\Phi$, we have
\[
\ba{rcl}
\delta_2 \Def D^2 \Phi(z) [\hat h]^2 & = & \la \nabla^2 F_*(s(z)) \sigma', \sigma' \ra + \la \nabla F_*(s(z)), \sigma'' \ra + \la \nabla^2 F(x) h_x,h_x \ra \nonumber\\
&\Def& a^2 + b + r^2.
\ea
\]
Denote $\tilde x = h_v x - v h_x$. In view of Assumption \ref{ass-Comp}, we have 
\[
\ba{rcl}
\sigma''' = D^4F(x)[h_x][\tilde x]^2 & \succeq_{K^*} & 3 r D^3 F(x) [\tilde x]^2 \; \refEQ{eq-FOmega2} \; 3r \sigma''.
\ea
\]

Therefore,
\begin{eqnarray*}{}
\delta_3 \Def D^3 \Phi(z) [\hat h]^3 & \refLE{eq-DPsi3} & D^3 F_*(s(z)) [\sigma']^3 + 3 \la \nabla^2 F_*(s(z)) \sigma', \sigma'' \ra \nonumber\\ &+& 3 r \la \nabla F_*(s(z)), \sigma'' \ra + D^3 F(x)[h_x]^3\nonumber\\
& \refLE{def-SCF} & 2 r^3 + 2 a^3 + 3a  \la \nabla F_*(s(z)) \sigma'', \sigma'' \ra^{1/2}  + 3 rb \nonumber\\
& \refLE{eq-RDir} & 2 r^3 + 2 a^3 + 3a  b + 3 rb = (a+r)(2(a^2 + r^2 - ar) + 3b).
\end{eqnarray*}

In view of definition of $\delta_2$, since $(a+r)^2 = \delta_2 - b + 2ar$, we have
\[
\ba{rcl}
\delta_3 & \leq & (a+r)( 2 \delta_2 - 2 ar + b)  \; =  \; (\delta_2 - b + 2ar)^{1/2} ( 2 \delta_2 - 2 ar + b).
\ea
\]

Denoting $\lambda = b -2ar$, we have $\delta_3 \leq  (\delta_2 - \lambda)^{1/2} ( 2 \delta_2 + \lambda)$. Maximizing this quasi-convex function in $\lambda$ on its natural domain, we get $\lambda^* = 0$. This gives us  the bound $\delta_3 \leq 2 \delta_2^{3/2}$. It remains to use definition (\ref{def-SCB}).
$\QR$

\section{Multiconic optimization and target-following approach}\label{sc-Mult}
\SetEQ

In this paper, we consider optimization problems with variables from linear vector spaces with explicit
{\em multiplex structure}:
\[
\ba{rcl}
\E & = & \E_1 \times \dots \times \E_n.
\ea
\]
Thus, for $x \in \E$, we have a natural decomposition 
\[
\ba{rcl}
x & = & (x_1, \dots, x_n), \quad x_i \in \E_i, \quad i = 1, \dots, n.
\ea
\]
Similarly, any linear function $g \in \E^* = \E_1^* \times \dots \times \E^*_n$, can be decomposed as follows:
\[
\ba{rcl}
g & = & (g_1, \dots, g_n), \quad g_i \in \E_i^*, \quad i = 1, \dots, n.
\ea
\]

In such spaces, we consider a problem of {\em Multiconic Optimization}, where the primal feasible cone $K \subset \E$ is formed as a direct product of $n$ symmetric cones  satisfying Assumption \ref{ass-Comp}: \color{black}
\[
\ba{rcl}
K & = & K_1 \times \dots \times K_n.
\ea
\]
 Note that Assumption \ref{ass-Comp} holds for the Lorentz cone and positive semidefinite cone, as well, see Examples \ref{ex-SDP2} and \ref{th-Lor}.  Using (\ref{eq-Expand1}),
 if Assumption  \ref{ass-Comp} holds for the cones $K_i$, then it holds for the product of the cones $K_i$.

\color{black}

Thus, the standard problem of {\em Multiconic Optimization} looks as follows:
\beq\label{prob-MCone}
\ba{c}
\min\limits_{x \in K} \Big\{ \la c, x \ra: \; \sum\limits_{i=1}^n A_i (x_i) = b \Big\},
\ea
\eeq
where $b \in \R^m$ and $A_i: \E_i \to \R^m$, $i = 1, \dots, n$. We assume that for all $i=1,\ldots,n$, $A_i$ have full column-rank. Thus, we can write the dual problem as follows:
\beq\label{prob-DCone}
\max\limits_{y \in \R^m} \Big\{ \la b, y \ra: \; s_i + A^*_i (y) = c_i, \; i=1, \dots, n \Big\}.
\eeq
Denote ${\cal F}_p = \{ x \in \inter K: \; A(x)=b \}$ and ${\cal F}_d = \{ (y,s) \in \R^m \times \inter K^* : \; s + A^* (y) = c \}$, where
\[{
A(x): \E \to \R^m, \quad A(x) \; \Def \; \sum_{i=1}^n A_i(x_i)}
\]
and $A^*(y) = \left(A^*_1(y), \dots ,A^*_n(y) \right) \in \E^*$. We assume that $A$ has full row-rank.
\BE\label{ex-ASDP}
Let $K_i = \S^{p_i}_+$, $p_i \geq 1$,  and $A_{i,j} \in \S^{p_i}$ for $i = 1, \dots, n$ and $j = 1, \dots,m$. Then, for any $1 \leq i\leq n$, we can define the components of $A_i(x_i)$ as
\[
\ba{rcl}
A_i^{(j)}(x_i) & = & \la A_{i,j}, x_i \ra, \quad j = 1, \dots, m.
\ea
\]
In this case, $A^*_i(y) = \sum\limits_{j=1}^m A_{i,j} y^{(j)} \in \S^{p_i}$.
$\QR$
\EE

In what follows, we always assume that the problems (\ref{prob-MCone})-(\ref{prob-DCone}) have a strictly feasible primal-dual point $(\hat x, \hat s, \hat y)$ such that
\beq\label{ass-Strict}
\hat x \in  \inter K, \quad \hat s \in \inter K^*, \quad A (\hat x) = b, \quad \hat s + A^*(\hat y) = c.
\eeq

For all symmetric cones $K_i$, we know the self-scaled barriers $F^i(\cdot)$ with parameters $\nu_i$, $i=1, \dots, n$. 
Our approach for solving the problems (\ref{prob-MCone})-(\ref{prob-DCone}) can be seen as an extension of the {\em Parabolic Target Space} technique (\cite{ParLCP,UTD_LP,DAM}). It is based on the following lemma.
\BL\label{lm-VBar}
For any $\bar v \in \R^n_{++}$, there exists a unique  $u(\bar v) = (x(\bar v), y(\bar v), s(\bar v)) \in {\cal F}_p \times {\cal F}_d$
such that
\beq\label{eq-VBar}
\ba{rcl}
s_i(\bar v) & = & - \bar v ^{(i)} \; \nabla F^i(\bar x_i), \quad i = 1, \dots, n.
\ea
\eeq
\EL
\proof
Indeed, denote $F_{\bar v}(x) = \sum\limits_{i=1}^m \bar v^{(i)} F^i(x)$ and consider the problem 
\beq\label{prob-SFeas}
\min\limits_{x:\, A x = b} \Big\{ \la c, x \ra + F_{\bar v}(x) \Big\}. 
\eeq
Note that for all feasible $x$, we have
\[
\ba{rcl}
\la c, x \ra + F_{\bar v}(x) & \geq &  \sum\limits_{i=1}^n \Big[ \la c_i, x \ra - \la \hat s_i, x \ra - F_*^i(\hat s_i) \Big] \\
\\
& \refEQ{ass-Strict} & \sum\limits_{i=1}^n \Big[ \la c_i, x \ra - \la c_i - A^*_i (\hat y), x \ra - F_*^i(\hat s_i) \Big] \; = \; \la b, \hat y \ra - \sum\limits_{i=1}^n F^i_*(\hat s_i).
\ea
\]
Thus, the objective function of problem (\ref{prob-SFeas}) is below bounded.  Since the function $F_{\bar v}(\cdot)$ is self-concordant, \color{black} we conclude that the point
$\bar x = \arg\min\limits_{x \in {\cal F}_p} \Big\{ \la c, x \ra + F_{\bar v}(x) \Big\}$ does exist. From the first-order optimality condition, we know that there exists a vector of dual multipliers $\bar y \in \R^m$ such that
\[
\ba{rcl}
c_i + \bar v^{(i)} \; \nabla F^i(\bar x_i) & = & A^*_i (\bar y), \quad i = 1, \dots, n.
\ea
\]
Hence, $\bar s \Def c - A^* (\bar y) \in \inter K^*$ and it satisfies relations (\ref{eq-VBar}).
$\QR$

In Section \ref{coupling}, we introduced a new type of self-concordant function using the hyperbolic coupling. More precisely, Theorem \ref{th-Main} shows that  we have the following self-concordant function for each cone:
\[
\Phi_i(x_i,s_i,v^{(i)}) =  F^i_*\Big(s_i + (v^{(i)})^2 \nabla F^i(x_i)\Big) + F^i(x_i), \quad i = 1, \dots, n.
\]
Let us introduce the control variable $w = (v_0,v) \in \R^{n+1}$ and our main variables $u = (x,y,s) \in \E \times \E^* \times \R^m$.  
Combining the self-concordant functions $\Phi_i$ we obtain the  the following barrier function: \color{black}
\[
 \hat{F}(u,w) \; =\; \sum\limits_{i=1}^n \Phi_i\Big(x_i,s_i,v^{(i)}\Big) - \ln( v_0 - \la c, x \ra + \la b,y \ra).
\]
Denote by ${\cal F} = \dom \hat{F}$ and 
\beq\label{def-VF}
\ba{rcl}
\vf(w) & = & \min\limits_{u \in {\cal F}_p \times {\cal F}_d} \{ \hat{F}(u,w): (u,w) \in {\cal F} \}. 
\ea
\eeq
Let $\nu \Def \sum\limits_{i=1}^n \nu_i$ and
$\| v \|^2_{\nu} \Def \sum\limits_{i=1}^n \nu_i (v^{(i)})^2$ for $v \in \R^n$.
\BT\label{th-VF}
We have $\vf(w) = - (\nu+1) \ln \rho(w) - \nu$ with $\rho(w) \; = \; {v_0 - \| v \|^2_{\nu} \over \nu+1}$ for all
\[
\ba{rcl}
w &\in & \dom \vf = \Big\{(v_0,v) \in \R^{n+1}: \; v_0 > \| v \|^2_{\nu} \Big\}.
\ea
\]
The optimal solution $u^*(w)= (x^*(w), y^*(w), s^*(w))$ of (\ref{def-VF}) satisfies the equations
\begin{eqnarray*}
s^*_i(w) & = &  - \bar v^{(i)}(w) \; \nabla F^i(x^*_i(w)), \\
\bar v^{(i)}(w) & = & (v^{(i)})^2 + \rho(w), \quad i = 1, \dots, n.
\end{eqnarray*}
\ET

\proof
Let us choose $\bar v > v^2$ and $v_0 > \| v \|^2_{\nu}$. Then, by Lemma \ref{lm-VBar}, there exists $u(\bar v)$ satisfying the relations (\ref{eq-VBar}). Therefore,
\begin{eqnarray*}
\Phi_i(x_i(\bar v),s_i(\bar v),v^{(i)}) & = & F^i_*\Big(- \bar v^{(i)} \nabla F^i(x_i(\bar v)) + (v^{(i)})^2 \nabla F^i(x_i(\bar v))\Big) + F^i(x_i(\bar v))\nonumber\\
& \refEQ{def-HomB} & - \nu_i \ln \left(\bar v^{(i)} - (v^{(i)})^2\right) + 
F^i_*\Big(-  \nabla F^i(x_i(\bar v))\Big) + F^i(x_i(\bar v))\nonumber\\
& \refEQI{eq-HomFX}{1} & - \nu_i \ln \left(\bar v^{(i)} - (v^{(i)})^2\right) - \nu_i.
\end{eqnarray*}
At the same time, $\la c, x(\bar v) \ra - \la b, y(\bar v) \ra = \la s(\bar v), x(\bar v) \ra = \sum\limits_{i=1}^n \la s_i(\bar v), x_i(\bar v) \ra \refEQI{eq-HomFX}{1} \sum\limits_{i=1}^n \nu_i \bar v^{(i)}$. Thus,
\[
\ba{rcl}
F(u(\bar v),w) & = & - \nu - \sum\limits_{i=1}^n \nu_i \ln \Big( \bar v^{(i)} - \left(v^{(i)}\right)^2 \Big) - \ln\Big(v_0 - \sum\limits_{i=1}^n \nu_i \bar v^{(i)} \Big).
\ea
\]
This expression is minimal for optimal $\bar v^{(i)}$ satisfying the following equation:
\[
\ba{rcl}
\bar v^{(i)} - \left(v^{(i)}\right)^2 & = & \Big[ v_0 - \sum\limits_{i=1}^n \nu_i \bar v^{(i)} \Big].
\ea
\]
These optimal values are as follows:
\[
\ba{rcl}
\bar v^{(i)} & = & \left(v^{(i)}\right)^2 + \rho(w), \quad i = 1, \dots, n,
\ea
\]
where $\rho(w) = {v_0 - \| v \|^2_{\nu} \over \nu+1}$.
In this case, $\la s(\bar v), x(\bar v) \ra = {\nu v_0 + \| v \|^2_{\nu} \over \nu+1}$, and we conclude that
\[
\ba{rcl}
\vf(w) & \leq & - \nu - \sum\limits_{i=1}^n \nu_i\ln \rho(w) - \ln \rho(w) \; = \; - (\nu+1) \ln \rho(w) - \nu.
\ea
\]
On the other hand, we have
\[
\ba{rcl}
\Phi_i(x_i,s_i,v^{(i)}) & = & F^i_*\Big(s_i + (v^{(i)})^2 \nabla F^i(x_i)\Big) + F^i(x_i) \\
\\
& \refGE{eq-HFenTwo} & - \nu_i \ln\la s_i + (v^{(i)})^2 \nabla F^i(x_i), x_i \ra - \nu_i + \nu_i \ln \nu_i\\
\\
&\refEQI{eq-HomFX}{1} & - \nu_i \ln \Big[ {1 \over \nu_i}\la s_i, x_i \ra - (v^{(i)})^2 \Big] - \nu_i.
\ea
\]
Denoting by $\tau_i = {1 \over \nu_i}\la s_i, x_i \ra$, we get
\[
\ba{rcl}
F(u,w) & \geq & - \nu - \sum\limits_{i=1}^n \nu_i \ln\Big(\tau_i - (v^{(i)})^2\Big) - \ln \Big(v_0 - \sum\limits_{i=1}^n \nu_i \tau_i\Big).
\ea
\]
As we have seen, the minimum of the right-hand side of this inequality in $\tau$ is equal to 
\[
\ba{c}
- (\nu+1) \ln \rho(w) - \nu. \QF
\ea
\]

\section{Computing the Search Directions}\label{sc-ASD}
\SetEQ

We consider the primal-dual formulation of the multiconic problem from Section \ref{sc-Mult}:
\begin{align}{}\label{problem1}
\min_{u=(x,y,s)} \la c,x \ra &- \la b,y \ra \nonumber\\
\sum_{i=1}^{n} A_i(x_i) &= b, \nonumber\\ 
s_i + A_i^*(y) &= c_i, \quad y \in \mathbb{R}^m,  \nonumber\\ \smallskip
x_i \in K_i,\; s_i\in& K_i^*,\; i=1,\ldots,n.
\end{align}
 Note that we can eliminate the variables $s_i$ by using $s_i=c_i-A_i^*(y)$, $i=1,\ldots,n$. Let $\tilde{u} = (x,y)$ and consider 
$\tilde{F}(\tilde{u},w)=\hat{F}(x,c-A^*(y),y,w)$.




As in Section 3 of \cite{DAM}, for the computation of the predictor search direction, firstly we update $w$. After that, we want to determine $\tilde{u}(w)=(x(w),y(w))$, which is the unique optimum point of  
\begin{align*}\label{fiw}
    \varphi(w) = \min\limits_{\tilde{u}}\Big\{ \tilde{F}(\tilde{u},w): & \; \sum_{i=1}^n A_i(x_i)=b,\\  & x_i \in K_i,\;  c_i-A_i^*(y) \in K_i^*,\; i=1,\ldots,n, \; y \in \mathbb{R}^m, \; (\tilde{u},w) \in \cal{\tilde{F}} \Big\},\nonumber
\end{align*}
where $\cal{\tilde{F}} = \dom \tilde{F}$.
Hence, considering the first order optimality conditions of this problem we have
\begin{eqnarray*}{}\label{firstorder}
\nabla_{\tilde u} \tilde F(\tilde{u}(w),w)+  A^* \lambda(w) &=& 0,  \nonumber\\
Ax(w) &=& b,
\end{eqnarray*}where $\lambda \in \mathbb{R}^m$ denotes the Lagrange multiplier.
From here we get the following system for the Jacobian $\tilde{u}'(w)=(x'(w),y'(w))$:
\begin{eqnarray}{}\label{secondder}
\nabla^2_{\tilde{u}\tilde{u}} \tilde F(\tilde{u}(w),w) \tilde{u}'(w) + \nabla^2_{\tilde{u}w} \tilde F(\tilde{u}(w),w) + A^*\lambda'(w) &=& 0, \nonumber\\
A x'(w) &=& 0.
\end{eqnarray}
 We use the following simplified notations: \[H_{xx}=\nabla_{xx}^2 \tilde{F}(x(w),y(w),w), 
\; H_{xy} =\nabla_{xy}^2 \tilde{F}(x(w),y(w),w),\] and
\[H_{yy} = \nabla_{yy}^2 \tilde{F}(x(w),y(w),w). \]
Moreover, let 
\[T_x=\nabla_{xw}^2 \tilde{F}(x(w),y(w),w), \quad T_y=\nabla_{yw}^2 \tilde{F}(x(w),y(w),w).\]
%
Therefore, (\ref{secondder}) is equivalent to the following system:
\begin{eqnarray}{}\label{secondder2}
H_{xx}x'(w)+ H_{xy} y'(w) + T_x +A^* \lambda'(w)  &=& 0,\nonumber\\
H_{yx}x'(w) + H_{yy} y'(w) + T_y  &=& 0, \nonumber\\ 
Ax'(w) &=& 0.
\end{eqnarray}
%
%
%
%
From the first equation of system (\ref{secondder2}) we have
\begin{equation}{}\label{secondder4}
x'(w) = -H_{xx}^{-1}\left(H_{xy} y'(w) + T_x + A^* \lambda'(w) \right).
\end{equation}
Hence, using this and the third equation of (\ref{secondder2}), we have
\begin{eqnarray}{}\label{secondder5}
A H_{xx}^{-1} \left(H_{xy} y'(w) + T_x + A^* \lambda'(w) \right) = 0.
\end{eqnarray}
This means that
\[\lambda'(w) = -\left(AH_{xx}^{-1}A^* \right)^{-1}A H_{xx}^{-1} \left(H_{xy} y'(w) + T_x \right)\]
and

\begin{eqnarray}{}\label{seconnder6}
H_{yy} y'(w) + T_y &=& H_{yx} H_{xx}^{-1} \left(H_{xy} y'(w) + T_x + A^* \lambda'(w) \right) \nonumber\\
&=& H_{yx} H_{xx}^{-1} \left[ H_{xy} y'(w) + T_x - A^{*} \left( A H_{xx}^{-1}A^{*}  \right)^{-1} A H_{xx}^{-1} \left( H_{xy} y'(w) + T_x \right) \right]. \nonumber
\end{eqnarray}
Thus, we have to solve the following system:
\begin{align*}{}\label{secondder7}
\Big[ H_{yy} - H_{yx}  \Big(H_{xx}^{-1} - H_{xx}^{-1} A^{*} &\left(A H_{xx}^{-1} A^* \right)^{-1} A H_{xx}^{-1} \Big) H_{xy}\Big] y'(w) \nonumber\\
&= H_{yx} \left[H_{xx}^{-1} - H_{xx}^{-1} A^* \left( A H_{xx}^{-1} A^*\right)^{-1} A H_{xx}^{-1} \right]T_x - T_y.
\end{align*}




From here, we can see that for the computation of the search directions, we need to invert the operator
$H_{xx}$ and the following matrices: \[ A H_{xx}^{-1}A^* \text{ and }  H_{yy} - H_{yx}  \left(H_{xx}^{-1} - H_{xx}^{-1} A^{*} \left(A H_{xx}^{-1} A^* \right)^{-1} A H_{xx}^{-1} \right) H_{xy},\]
that are   matrices of size $m \times m$. The first matrix is invertible, because $H_{xx}$ is positive definite and the operator $A$ has full row rank. The second matrix is invertible, because it is the sum of two positive definite matrices: 
\begin{itemize}
    \item $H_{yy}-H_{yx}H_{xx}^{-1}H_{xy}$, the Schur-complement of the positive definite operator $H$; 
    \item $H_{yx}H_{xx}^{-1}A^* \left(A H_{xx}^{-1} A^* \right)^{-1} A H_{xx}^{-1} H_{xy}$, which is the transformation of the positive definite matrix $\left(A H_{xx}^{-1} A^* \right)^{-1}$ by the $m \times m$-matrix $H_{yx}H_{xx}^{-1}A^*$. 
    \end{itemize}

Since we are not exactly at  $(u(w),w)$, we solve system (\ref{secondder}) using the Hessian  $\nabla^2_{\tilde{u}\tilde{u}} \tilde F(\tilde{u},w)$ and the Jacobian $\nabla^2_{\tilde{u}w} \tilde F(\tilde{u},w)$. By $U(\tilde{u},w)=(X\left(\tilde{u},w\right),Y\left(\tilde{u},w\right))$ we denote the solution of the following system:
\begin{eqnarray*}{}\label{seconddernew}
\nabla^2_{\tilde{u}\tilde{u}} \tilde F(\tilde{u},w) U(\tilde{u},w) + \nabla^2_{\tilde{u}w} \tilde F(\tilde{u},w) + A^*\Lambda(\tilde{u},w) &=& 0, \nonumber\\
A X\left(\tilde{u},w\right) &=& 0.
\end{eqnarray*}
\noindent Hence, the predictor search direction is $\Delta_p \tilde{u} = U(\tilde{u},w)\Delta w.$ 

For our method, we do not need to compute the whole matrix $U(\tilde u, w)$. Instead, we can find directly $\Delta_p \tilde u = (\Delta_p \tilde x, \Delta_p \tilde y)$ from the following system:
\begin{eqnarray}{}\label{seconddernew1}
\nabla^2_{\tilde{u}\tilde{u}} \tilde F(\tilde{u},w) \Delta_p \tilde u + \tilde d  + A^*\Delta_p \tilde  \lambda  &=& 0, \nonumber\\
A  \Delta_p \tilde x&=& 0,
\end{eqnarray}
where $\tilde d = \nabla^2_{\tilde{u}w} \tilde F(\tilde{u},w) \Delta w$.

In the corrector step, we fix $w$ and aim to minimize the functional proximity measure in $u$. For this purpose, in the corrector step we use the Newton direction $\Delta_c \tilde{u}=(\Delta_c x, \Delta_c y)$, which is the solution of the following system:
\begin{eqnarray}{}\label{seconddercorr}
\nabla^2_{\tilde{u}\tilde{u}} \tilde F(\tilde{u},w) \Delta_c \tilde{u} + \nabla_{\tilde{u}} \tilde F(\tilde{u},w) + A^* \lambda &=& 0, \nonumber\\
A \Delta_c x &=& 0.
\end{eqnarray}

Note that this has exactly the same structure as system (\ref{seconddernew1}). For this reason, we need to invert matrices of the size similar to the predictor step.

In the following example we show that in the SDP case the matrices and operators needed for the search directions are easily computable.

\begin{example}{}\label{sdp_deriv}
In the case of semidefinite optimization, using the notations in Example \ref{ex-ASDP} $K = \mathbb{S}_+^{p_1} \times \ldots \times \mathbb{S}_+^{p_n}$. We have $s_i=c_i-\sum_{j=1}^m A_{i,j}y^{(j)} \in \mathbb{S}^{p_i}_+$ and let $\bar{x}_i=x_i+(v^i)^2\nabla F^i_*(s_i)$, $i=1, \ldots, n$. Then, 
\[ \la g,H_{xx}g \ra = \sum_{i=1}^{n} \la g_i, \nabla^2 F^{i}(\bar{x}_i) g_i \ra + \frac{ \la c, g \ra^2}{(v_0-\la c,x \ra + \la b,y \ra)^2}, \text{ for } g \in K.\]
Observe that the inversion of $H_{xx}$ is not expensive, because it is the sum of a block diagonal and a one-rank operator. As it is shown above, the blocks correspond to the cones. Therefore, we can use the Sherman--Morrison formula.
\begin{eqnarray*}{}
\la g,H_{xx}^{-1}g \ra &=& \sum_{i=1}^{n} \la g_i, [\nabla^2 F^{i}(\bar{x}_i)]^{-1} g_i \ra \nonumber\\
&-& \frac{\sum_{i=1}^n \la c_i[\nabla^2 F^{i}(\bar{x}_i)]^{-1}, g_i \ra^2}{(v_0-\la c,x \ra + \la b,y \ra)^2+ \sum_{i=1}^n \la c_i [\nabla^2 F^{i}(\bar{x}_i)]^{-1},c_i \ra}.
\end{eqnarray*}
Furthermore, $AH_{xx}^{-1}A^* \in \mathbb{R}^{m\times m}$ is invertible, where for any $\ell,k = 1, \ldots,m$ we have
\begin{align*}
(A H_{xx}^{-1} A^*)_{\ell,k} = \sum_{i=1}^{n} &\la A_{\ell,i}, [\nabla^2 F^i(\bar x_i)]^{-1} A_{k,i}\ra\\
-&  \frac{ \sum_{i=1}^{n}\sum_{j=1}^n\;\la c_i, [\nabla^2 F^{i}(\bar{x}_i)]^{-1} A_{\ell, i} \ra\; \la c_j, [\nabla^2 F^{j}(\bar{x}_j)]^{-1} A_{k,j} \ra}{(v_0-\la c,x \ra + \la b,y \ra)^2+ \sum_{i=1}^n\, \la c_i, [\nabla^2 F^{i}(\bar{x}_i)]^{-1}c_i \ra}  \\
=\sum_{i=1}^{n}& \la A_{\ell,i}, \bar x_i A_{k,i}\bar x_i\ra
-  \frac{ \sum_{i=1}^{n}\sum_{j=1}^n\; \la c_i,\bar{x}_i A_{\ell, i}\bar{x}_i \ra\; \la c_j,\bar{x}_j A_{k,j} \bar{x}_j\ra}{(v_0-\la c,x \ra + \la b,y \ra)^2+ \sum_{i=1}^n \,\la c_i, \bar{x}_i c_i \bar{x}_i\ra}. 
\end{align*}

The matrix $H_{yx}H_{xx}^{-1}A^*$ can be computed in a similar way.

Note that the case when $p_i=1$, $i=1,\ldots,n$ corresponds to the LP problem. If all $p_i$ are small, then the computational cost of an iteration mainly depends on the number of constraints $m$ and the number of cones $n$. 
$\QR$
 \end{example} 

\section{Functional proximity measure and algorithmic scheme}\label{sc-Alg}
\SetEQ

Let us describe now the Parabolic Target-Following approach as applied to the primal-dual problem (\ref{prob-MCone})-(\ref{prob-DCone}).
In accordance to Theorem~\ref{th-VF}, we can define the following {\em Functional Proximity Measure} (FPM), which estimates the distance between the point $u=(x,y,s)\in{\cal F}_p\times{\cal F}_d$ 
and the point $u(w)$ with $w = (v_0,v)\in \dom \varphi$. This measure is defined as follows:
\begin{eqnarray*}\label{def-Prox}
\Omega(u,w)  & = & \hat{F}(u,w) - \varphi(w) \\&=& \sum\limits_{i=1}^n \Phi_i(u_i,v^{(i)}) - \ln( v_0 - \la c, x \ra + \la b, y \ra) + (\nu+1) \ln \rho(w) + \nu.
\end{eqnarray*}
It allows to construct 
a long step interior-point algorithm described in \cite{DAM}. Later, we will give its structural description in Algorithm \ref{alg:PTS}. And now, let us explain how it functions.

Our algorithm is a sequence of interchanging Corrector and Predictor Stages, in accordance to the threshold values $0 < \beta_1 < \beta_2$.

\begin{itemize}
\item
At the Corrector Stage, we fix the control variable $w$ and minimize the proximity measure $\Omega(u,w)$ in $u$, up to the moment it becomes smaller than $\beta_1$. This minimization is done by a Damped Newton Method as applied to the function $\Omega(\cdot,w)$. The corresponding search direction is computed by the linear system (\ref{seconddercorr}). 

When we are outside of the region of quadratic convergence, each iteration of the Damped Newton Method decreases the value of self-concordant function by an absolute constant. Hence, the upper bound on the total number of the corrector steps cannot be bigger than a constant proportional to the initial value of the proximity measure.
\item
The Predictor Stage consists in a single predictor step, which is performed from a small neighborhood of the central path. Following the framework of \cite{UTD_LP}, we prove that this step reduces the following merit function:
\beq\label{def-MF}
\ba{rcl}
\mu^*(w) & \Def & {\UL{\alpha}(w)\over \UL{\alpha}(w) - 1} v_0 = {v_0^2 \over v_0 - \| v \|^2_{\nu}} > v_0,  \quad \UL{\alpha}(w) \Def {v_0 \over \| v \|^2_{\nu}} > 1.
\ea
\eeq
\end{itemize}

Let us present now the general scheme of our method.

\begin{algorithm}[H]
\caption{Parabolic Target-Space Interior Point Algorithm}\label{alg:PTS}

\begin{algorithmic}
\REQUIRE  \hspace{0.6mm}
 the starting point $(u^s, w^s )\in Q$,\\ 
 \hspace{14mm} the accuracy parameter $\varepsilon > 0$,\\ 
  \hspace{14mm} the tolerance parameters $\beta_1\in(0,1-\ln 2)$ and $\beta_2>\omega_*(\omega^{-1}(\beta_1))$.\\ 

\hspace{-3mm}\textbf{Iteration:}
\STATE $(u,w) := (u^s,w^s)$.
\REPEAT{}
\IF{$\Omega(u,w) > \beta_1$}
\STATE {\bf  Corrector step: }
\STATE \quad Compute $\Delta_c \tilde{u}$ by (\ref{seconddercorr}) and extend it up to $\Delta_c u$ in accordance to (\ref{problem1});
\STATE \quad Set $u:= u +\alpha_c \Delta_c u$, where $\alpha_c$ is the stepsize of Damped Newton Method.\\
\hspace{-4mm}\textbf{else} \\
{\bf Predictor step:}
\STATE \quad Choose a target direction $\Delta w \in \mathbb{R}^{n + 1} \setminus \{ \mathbf{0} \}$.
\STATE \quad Compute $\Delta_p \tilde{u}$ by (\ref{seconddernew1}) and extend it up to $\Delta_p u$ in accordance to (\ref{problem1});
\STATE \quad Update $w := w+ \alpha_p \Delta w$, where $\alpha_{p} = \max\{\alpha \in [0,1]: \Omega(u,w)\leq \beta_2\};$
\STATE \quad Set $u=u+\alpha_p \Delta_p u$;
\ENDIF
\UNTIL{$v_0 \leq \varepsilon$}


\end{algorithmic}
\end{algorithm}

\BR
For the target search direction, as it was proposed by \cite{DAM}, the greedy direction $\Delta w = - w$ is a reasonable choice.
In view of the presences of line-search at the Predictor Step, this algorithm is a long-step procedure. As it was already explained, we suggest to use the predictor search direction defined in \cite{DAM}. It is different from 
the universal tangent direction  \cite{UTD_LP} proposed for Linear Programming problems.
$\QR$
\ER

Let us analyze now the effect of the predictor step. In accordance to Theorem 7 \cite{DAM}, we know that
\[
\alpha_p \| \Delta w \|_w  \geq  \sigma \Def (1- \omega^{-1}(\beta_1))^2[\omega^{-1}_*(\beta_2) - \omega^{-1}(\beta_1)].
\]
Note that for the Greedy Direction $\Delta w = - w$, we have
\[
\ba{rcl}
\| \Delta w \|_w^2 & = &  (\vf(\alpha w))''_{\alpha = 1} = (\nu+1)\Big( - \ln \alpha - \ln (v_0 - \alpha \| v \|^2_{\nu}) \Big)''_{\alpha = 1}\\
\\
& = & (\nu+1) \left[ 1 + {  \| v \|^4_{\nu}\over (v_0 -  \| v \|^2_{\nu})^2} \right] = (\nu+1) \left[ 1 + {  1 \over (\UL{\alpha}(w) - 1)^2} \right].
\ea
\]
Hence,
\[
\alpha_p \; \geq \;  {\sigma \over \| \Delta w \|_w} \; = \; { \sigma (\UL{\alpha}(w) - 1) \over [ (\nu+1)(1 + (\UL{\alpha}(w) - 1)^2)]^{1/2}} \; \refGE{def-MF} \;  {\sigma \over (\nu+1)^{1/2} } \cdot { \UL{\alpha}(w) - 1 \over \UL{\alpha}(w)}.
\]
Therefore, in view of Lemma 5.7 \cite{UTD_LP}, we have 
\beq\label{eq-MRate}
\mu^*(w_+)  \leq {1 \over 1+\gamma}\mu^*(w) \leq e^{-{\gamma \over 1+\gamma}} \; \mu^*(w), \qquad \gamma =  {\sigma \over (\nu+1)^{1/2} } ,
\eeq
where $w_+$ is the result of one Predictor Step of Algorithm \ref{alg:PTS} from the point $w \in \dom \vf$ with $\Omega(u,w) \leq \beta_1$.

Now we can estimate the total number of Newton-type steps of Algorithm \ref{alg:PTS}, which are necessary for computing and $\varepsilon$-solution of our problem. Clearly, it depends on the quality of the starting point $(u^s, w^s)$. In view of inequality (\ref{eq-MRate}), the total number of the Predictor Steps cannot be bigger than 
\[
\ba{rcl}
N_p & \Def & {1+\gamma \over \gamma} \ln {\mu^*(w^s) \over \epsilon} \; = \; \left(1 + \sigma^{-1} \sqrt{\nu+1} \right) \Big[ \ln {1 \over \epsilon} + \ln {(v^s_0)^2 \over v^s_0 - \| v ^s \|^2_{\nu}} \Big].
\ea
\]

Note that at each Corrector Stage (except the preliminary one), the number of Corrector Steps is bounded by an absolute constant (dependent on $\beta_1$ and $\beta_2$), Hence, the total number of the Newton-type steps of the method along its main trajectory is bounded by $O(N_p)$. At the same time, the number of corrector steps at the preliminary stage is bounded by a value proportional to
\[
\ba{rcl}
N_c^0 & \Def & \Omega(u^s,w^s) \nonumber\\
&=&  \sum\limits_{i=1}^n \Phi_i(u_i^s, v^s_i) - \ln( v_0^s- \la c, x^s \ra + \la b, y^s \ra) + (\nu+1) \ln {v_0^s - \| v^s \|^2_{\nu} \over \nu + 1} + \nu.
\ea
\]

We assume that the feasible point $u^s$ is given. However, we are free in the choice of $w^s$. Note that both $N_p$ and $N_c^0$ depend on $w^s$. At the same time, the dependence in $N_c^0$ is much stronger since the factor for the logarithm there is $(\nu+1)$, not $O(\sqrt{\nu+1}$). Therefore, we suggest to choose the initial $w^s$ by minimizing $N_c^0$ in $w^s$. This is the subject of Section \ref{sc-Start}.

\section{Initialization of control variables}\label{sc-Start}
\SetEQ

For starting our predictor-corrector scheme, we need to initialize the control variable $w$. Suppose we have a point $u = (x, y, s) \in \rint {\cal F}$, which can be extended up to the variable $\hat z = (v_0, z)$ with $z = z(x,s,v) \Def (z_1, \dots, z_n)$ and $z_i = (x_i, s_i, v^{(i)})$, $i = 1, \dots, n$. In this extension, it is natural to use the control variable $w = (v_0, v)$, which minimizes the value of proximity measure $\Omega(\cdot)$. 

Hence, we need to solve the following minimization problem:
\[
\min\limits_{v_0,v} \Big\{ \Omega(\hat z) = \sum\limits_{i=1}^n \Phi_i(x_i,s_i,v^{(i)}) - \ln (v_0 - \la s, x \ra) + (\nu+1) \ln {v_0 - \| v \|_{\nu}^2 \over \nu+1} + \nu \Big\}
\]
Note that $v_0 > \la c, x \ra - \la b, y \ra = \la s, x \ra > \| v \|_{\nu}^2$ and 
\[
\ba{rcl}
\Omega'_{v_0}(\hat z) & = & - {1 \over v_0 - \la s, x \ra} + {\nu+1 \over v_0 - \| v \|_{\nu}^2} \; = \; {\nu v_0 + \| v \|_{\nu}^2 - (\nu+1) \la s, x \ra \over (v_0 - \la s, x \ra)(v_0 - \| v \|_{\nu}^2)}.
\ea
\]
Therefore, the unique minimum in $v_0$ is achieved at $v_0^* = \la s, x \ra + {1 \over \nu}(\la s, x \ra - \| v \|_{\nu}^2)$. Thus,
\[
\ba{rcl}
\Omega(v_0^*,z) & = & \sum\limits_{i=1}^n \Big[ F_*^i\left(s_i+(v^{(i)})^2 \nabla F^i(x_i)\right) + F^i(x_i) \Big] + \nu \ln {\la s, x \ra - \| v \|_{\nu}^2 \over \nu} + \nu.
\ea
\]
Note that for $v = 0$, we get the classical functional proximity measure, which estimates the distance from $u$ to the main central path:
\[
\ba{c}
\Omega(v_0^*,z(x,s,0)) \; = \; \Omega_K(x,s) \; \Def \; \sum\limits_{i=1}^n \Big[ F_*^i(s_i) + F^i(x_i) \Big] + \nu \ln {\la s, x \ra \over \nu} + \nu\\
\\
= \; \sum\limits_{i=1}^n \Big[ F_*^i(s_i) + F^i(x_i)  + \nu_i \ln {\la s_i, x_i \ra \over \nu_i} + \nu_i \Big] - \sum\limits_{i=1}^n   \nu_i \ln {\la s_i, x_i \ra \over \nu_i} +  \nu \ln {\la s, x \ra \over \nu}\\
\\
=\; \sum\limits_{i=1}^n \Omega_{K_i}(x_i,s_i) + \beta(x,s),
\ea
\]
where $\beta(x,s) \Def - \sum\limits_{i=1}^n   \nu_i \ln {\la s_i, x_i \ra \over \nu_i} +  \nu \ln {\la s, x \ra \over \nu} \geq 0$ is non-negative in view of convexity of function $-\ln(\cdot)$.

By choosing an appropriate $v \neq 0$, we could try to find a deviated central path which is much closer to the starting point $u \in {\cal F}$. From inequality (\ref{eq-HFenTwo}), we can see that the distance to the main central path is zero if and only if
\beq\label{eq-ZeroCP}
\ba{rcl}
s_i & = & - \mu \nabla F^i(x_i), \quad i = 1, \dots, n,
\ea
\eeq
for certain $\mu > 0$.

Let us show how we can choose an appropriate $v$. Since all variables except $v$ are fixed, we have to consider the following minimization problem:
\beq\label{prob-MinB}
\ba{c}
\min\limits_{\bar v \in \R^n_+} \Big\{ \psi(\bar v) \Def \sum\limits_{i=1}^n  \Big[ F_*^i(s_i+ \bar v^{(i)} \nabla F^i(x_i)) + F^i(x_i) \Big] + \nu \ln {\la s,x \ra - \la d, \bar v \ra \over \nu}  + \nu  \Big\},
\ea
\eeq
where $\bar v^{(i)} \Def (v^{(i)})^2$, $i=1, \dots, n$, and $d = (\nu_1, \dots, \nu_n)^T$. 
Note that the first-order optimality condition of problem (\ref{prob-MinB}) for $\bar v^{(i)} > 0$ is as follows:
\beq\label{eq-FirstB}
{1 \over \nu_i} \la \nabla F^i(x_i), \nabla F^i_*(s_i + \bar v^{(i)} \nabla F^i(x_i) \ra \; = \; {\nu \over \la s, x \ra - \la d, \bar v \ra}.
\eeq

\BR\label{rm-Pos}
Let us assume that for any $i = 1, \dots, n$, the solution $\bar v^{(i)}_*$ of equation (\ref{eq-FirstB}) is non-negative. Then,
\begin{eqnarray*}
\Omega(v_0^*,z(x,s,\bar v_*)) & \refLE{eq-HFenTwo} & \sum\limits_{i=1}^n \Big[ - \nu_i + \nu_i \ln {\nu \over \la s, x \ra - \la d, \bar v_* \ra }  \Big] + \nu \ln {\la s, x \ra - \la d, \bar v_*\ra \over \nu} + \nu \; \nonumber\\
&=& \; 0, 
\end{eqnarray*}
 which means that if $\bar{v}_*$ is nonnegative, then it is the optimal solution of (\ref{prob-MinB}) and the optimal value is $0$. In other words, in this case, for the initial point $(x,s)$, we can give proper values $(v_0^*,\sqrt{\bar{v}_*})$ for the control variable with which we are at a deviated central path. \color{black}
 $\QR$
\ER

 Note that the last two terms in $\psi(\bar{v})$ have the form: $\nu \ln \frac{c}{\nu} + \nu = \min_{\gamma>0} c \gamma - \nu \ln \gamma$, where $c>0$. This gives the idea to \color{black}
represent the function $\psi(\bar v)$ in the following {\em extended form}:
\begin{eqnarray*}
\psi(\bar v) & \Def  & \min\limits_{\gamma > 0}  \Psi(\bar v, \gamma) ,  \quad \Psi(\bar v, \gamma) \; \Def \; \sum\limits_{i=1}^n \Psi_i(\bar v_i, \gamma),\\
\Psi_i(\tau,\gamma) & = & F_*^i(s_i+ \tau \nabla F^i(x_i)) + F^i(x_i)  + \gamma [\la s_i,x_i\ra - \nu_i \tau] - \nu_i \ln \gamma
\\
& \refGE{eq-HFenTwo}  & - \nu_i - \nu_i \ln{\la s_i,x_i\ra - \nu_i \tau \over \nu_i} + \gamma [\la s_i,x_i\ra - \nu_i \tau] - \nu_i \ln \gamma \; \geq \; 0, \quad i = 1, \dots, n.
\end{eqnarray*}
Let us fix some $\gamma > 0$ and consider the following functions:
\[
g_i(\gamma) \; \Def \; \min\limits_{\tau \geq 0} \Psi_i(\tau,\gamma), \qquad g(\gamma)  \;\Def\;   \min\limits_{\bar v \geq 0} \Psi(\bar v, \gamma) \; = \; \sum\limits_{i=1}^n g_i(\gamma).
\]
Denote by $\bar v_i(\gamma) \Def \arg\min\limits_{\tau \geq 0} \Psi_i(\tau, \gamma)$, $\gamma > 0$. Since $\bar v_i(\gamma)$ is uniquely defined, we have 
\beq\label{eq-FPrime}
\ba{rcl}
g'_i(\gamma) & = & \la s_i, x_i \ra - \nu_i \bar v_i(\gamma) - {\nu_i \over \gamma}.
\ea
\eeq 

For analyzing these objects, it is convenient to 
define the following univariate functions:
\[
\zeta_i(\tau) \;\Def\; \la \nabla F^i(x_i), \nabla F^i_*(s_i + \tau \nabla F^i(x_i)) \ra, \quad \tau \geq 0, \quad  i = 1, \dots, n
\]
Note that
\[
\ba{rcl}
\zeta'_i(\tau) & = & \la \nabla F^i(x_i), \nabla^2 F^i_*(s_i + \tau \nabla F^i(x_i))\nabla F^i(x_i) \ra \; \geq \; 0,\\
\\
\zeta''_i(\tau) & = & D^3 F^i_*(s_i + \tau \nabla F^i(x_i))[\nabla F^i(x_i)]^3 \; \refGE{eq-Expand1} \; 0.
\ea
\]
Thus, all $\zeta_i(\cdot)$ are convex and increasing functions of $\tau \geq 0$.  It is clear that
\beq\label{eq-RepDPsi}
\ba{rcl}
{\partial \Psi_i(\tau,\gamma)  \over \partial \tau} & = & \zeta_i(\tau) -  \gamma \nu_i, \quad i = 1, \dots, n.
\ea
\eeq

In view of representation~(\ref{eq-RepDPsi}),  we have two cases, since $\gamma_1^i \Def \frac{1}{\nu_i}\zeta_i(0)$ splits the domain of $\gamma$ in the following way.
For all $0 < \gamma \leq \gamma_1^i$ we have $\bar v_i(\gamma) = 0$. At the same time, for all $\gamma > \gamma_1^i$, the optimal point $\bar{v}_i(\gamma)$ is a unique positive  solution of the following equation:
\beq\label{def-VG}
\ba{rcl}
\zeta_i(\bar v_i(\gamma)) & = & \gamma \nu_i, \quad i = 1, \dots, n.
\ea
\eeq

By (\ref{eq-FPrime}) this means that \begin{equation}{}\label{gderiv}
g_i'(\gamma)=\langle s_i, x_i \rangle - \frac{\nu_i}{\gamma}, \quad \gamma \in  (0,\gamma^i_1].
\end{equation}
 Therefore, the functions $g_i$ have an extremum at 
\[
\ba{rcl}
\gamma_0^i & = &  {\nu_i \over \la s_i,x_i \ra}, \quad 1 \leq i \leq n. \color{black}
\ea
\]

Note that $\gamma_0^i \refLE{eq-HFenTwo} \gamma_1^i$ for all $1 \leq i \leq n$.  The following lemma proves the convexity of the functions $g_i$, where $1, \ldots,n$. 
\BL\label{lm-VF}
All functions $g_i(\gamma)$, $1 \leq i \leq n$, are convex in $\gamma > 0$.
\EL
\proof
For $0 < \gamma \leq \gamma_1^i$, using (\ref{gderiv}) we have $g''(\gamma) = {\nu_i \over \gamma^2} > 0$.  For $\gamma > \gamma_1^i$, from equation (\ref{def-VG}), we have
$\zeta'_i(\bar v_i(\gamma) ) \bar v_i'(\gamma) = \nu_i$. Therefore, denoting $s_i(\gamma) = s_i + \bar v_i(\gamma) \nabla F^i(x_i)$, we have
\[
\ba{rcl}
g_i''(\gamma) & = & {\nu_i \over \gamma^2} - {\nu^2_i \over \zeta'_i(\bar v_i(\gamma) ) } \; = \; {\nu_i \over \gamma^2}  \Big[ 1 - { \nu_i  \gamma^2\over \la \nabla F^i(x_i), \nabla^2 F^i_*(s_i(\gamma))\nabla F^i(x_i) \ra } \Big]\\
\\
& \refEQ{def-VG} & {\nu_i \over \gamma^2}  \Big[ 1 - { \la \nabla F^i(x_i), \nabla F^i_*(s_i(\gamma)) \ra^2 \over \nu_i \la \nabla F^i(x_i), \nabla^2 F^i_*(s_i(\gamma))\nabla F^i(x_i) \ra } \Big].
\ea
\]
It remains to note that by Cauchy-Schwartz inequality for the local norm, we obtain
\[
\ba{rcl}
\la \nabla F^i(x_i), \nabla F^i_*(s_i(\gamma)) \ra^2 & \leq & \la [\nabla^2 F^i_*(s_i(\gamma))]^{-1} \nabla F^i_*(s_i(\gamma)), \nabla F^i_*(s_i(\gamma)) \ra \\
\\
& & \times \la \nabla F^i(x_i), \nabla^2 F^i_*(s_i(\gamma))\nabla F^i(x_i) \ra\\
\\
& \refEQI{eq-HomFX}{3} & \nu_i  \la \nabla F^i(x_i), \nabla^2 F^i_*(s_i(\gamma))\nabla F^i(x_i) \ra. \QR
\ea
\]

Since $g_i$ are convex functions, $\gamma_0^i$ is a global minimum point for all $i=1,\ldots,n$.
This allows us to define the initial interval $[ \underline \gamma_0, \bar \gamma_0 ]$ containing the optimal value
\beq\label{prob-Gamma}
\ba{rcl}
\gamma_* & = & \arg\min\limits_{\gamma > 0} g(\gamma),
\ea
\eeq
where 
$\underline \gamma_0 = \min\limits_{1 \leq i \leq n} \gamma_0^i$ and $\bar \gamma_0 = \max\limits_{1 \leq i \leq n} \gamma_0^i$. 

The value $\gamma^*$ can be easily approached by a bisection procedure, which starts from $\underline \gamma_0$. \color{black} For its implementation, we need to have an auxiliary solvers for equations (\ref{def-VG}). In our approach, we assume that the cones $K_i$ are small-dimensional. So, the approximate solution of  equations (\ref{def-VG}) with very high accuracy can be easily obtained.

The optimal solution of problem (\ref{prob-Gamma}) allows us to form a good starting point for the control variables $w^s = (v_0^s, v^s)$. Indeed, let us define it as follows:
\beq\label{eq-GOpt}
\ba{rcl}
v^s_i & = & \sqrt{\bar v_i(\gamma_*)}, \quad i = 1, \dots, n,\\
\\
v_0^s & = & \la s, x \ra + {1 \over \nu}(\la s, x \ra - \| v^s \|^2_{\nu}).
\ea
\eeq
This rule ensures a small value of the initial functional proximity measure $\Omega(u^s,w^s)$, which defines the estimate $N_c^0$ (see Section \ref{sc-Alg}). 

In view of (\ref{eq-FPrime}) and (\ref{eq-GOpt}), the optimality condition for problem (\ref{prob-Gamma}) is
\beq\label{eq-CGamma}
\la s, x \ra - \| v^s \|^2_{\nu} \; =\; {\nu \over \gamma_*}.
\eeq
Therefore, $v_0^s \refEQ{eq-GOpt} \la s, x \ra + {1 \over \gamma_*}$.
This means that
\[
\ba{rcl}
\mu^*(w^s) & = & {(v_0^s)^2 \over v_0^s - \| v^s \|^2_{\nu} } \; = \;  {\gamma_* \over 1 + \nu} \left( \la s, x \ra + {1 \over \gamma_*}\right)^2 \; = \; {1 \over 1 + \nu} \Big[ 2 \la s, x \ra + \gamma_* \la s, x \ra^2 + {1 \over \gamma_*} \Big]\\
\ea
\]

Note that relation (\ref{eq-CGamma}) guarantees that $\gamma_* \geq \frac{\nu}{\langle s, x \rangle} \Def  \gamma^*_0$, hence we can shrink the search interval for $\gamma_*$ onto $[\gamma^*_0,\bar{\gamma}_0].$ Since on this interval the function $\mu^*(w^s)$ is monotonically increasing in $\gamma_*$, we have the following bound:
\begin{equation*}\label{eq-MuB}
\mu^*(w^s)  \leq    {1 \over 1 + \nu} \Big[ 2 \la s, x \ra + \bar \gamma_0 \la s, x \ra^2 + {1 \over \bar \gamma_0} \Big] \leq {1 \over 1 + \nu} \Big[3 \langle s,x \rangle + \bar \gamma_0 \la s, x \ra^2 \Big].
\end{equation*}
This shows that this initial value for the control variables keep the value of $N_p$ also on a reasonable level.

 We have already mentioned that if we have a universal constant $\mu$ such that our starting point satisfies (\ref{eq-ZeroCP}), then with the choice $v=0$ we are on the classical central path. Now we prove that assuming a weaker condition on the starting point $u^s$, we still can choose proper $w^s$ such that we are on a deviated central path, i.e. we have $\Omega(u^s,w^s)=0$. Let us assume that our starting point satisfies
\begin{equation}{}\label{cond}
s_i = - \mu_i \nabla F^i(x_i), \text{ with some } \mu_i > 0 \text{ for all } i=1,\dots,n.
\end{equation}
This means that $\langle s_i, x_i \rangle \refEQI{eq-HomFX}{1}  \mu_i \nu_i$, $i=1,\ldots,n$. Furthermore, using (\ref{def-HomB}), (\ref{eq-FDual}) and (\ref{eq-HomFX}) we have
\begin{eqnarray*}{}
\Psi_i(\bar{v}_i, \gamma) &=& F_i^*(-(\mu_i-\bar{v}_i)\nabla F_i(x_i)) + F_i(x_i) + \gamma(\nu_i \mu_i - \nu_i \bar{v}_i) - \nu \ln \gamma \nonumber\\
&=&
\nu_i \big[-1 + \gamma(\mu_i - \bar{v}_i) - \ln \left(\gamma(\mu_i - \bar{v}_i)\right)\big]  
\end{eqnarray*}
Note that $g_i(\gamma)$ is the minimum of $\Psi_i(\bar{v}_i,\gamma)$ in $\bar{v}_i$. The optimum point of this is 
\begin{equation}\label{vigamma}
\bar{v}_i(\gamma) = \mu_i-\frac{1}{\gamma}.
\end{equation}
Therefore, $g_i(\gamma)\equiv 0$ for all $i=1,\ldots,n$ and $\gamma>0$. For any $\gamma^*\geq \frac{1}{\min \mu_i}$,  the nonnegative values $\bar{v}_i(\gamma^*)$ by (\ref{vigamma}) define control variable $w^s$ given in (\ref{eq-GOpt}) such that $\Omega(u^s,w^s)=0$.

\begin{remark}\label{ex-LP}
In the case of linear programming we have $K = \R^n_+ = \prod\limits_{i=1}^n K_i$ with $K_i=\R_+$, $i=1, \dots, n$. Then, for any feasible starting point $(x,s)$ the condition (\ref{cond}) is satisfied with $\mu_i= s_i x_i $, $i=1,\ldots,n$. This means that $(x,s)$ is on the deviated central path, i.e. $\Omega(u,w)=0$, with the control variables
$$v_i^s = \sqrt{ x_is_i - \xi}, \; v_0^s = \la x, s \ra+\xi,$$
for any $0 \leq \xi \leq \min x_is_i$, $i=1,\ldots,n$. 
This explains the machninery behind the choice of starting control variables in \cite{DAM}.
$\QR$
\end{remark}

\color{black}

\section{Conclusion and future research}

We proposed a new interior-point algorithm for solving multiconic optimization problems using the parabolic target space approach. We introduced a new concept of hyperbolic coupling and presented the main steps of the complexity analysis of the algorithm. We proposed a new and efficient procedure for computing the starting values of control variables. For the future research, it would be interesting to investigate the possibilities for the local quadratic convergence of the method. Furthermore, our aim is to prove  Assumption \ref{ass-Comp} in general for all symmetric cones.

\section*{Acknowledgement}
This research was supported by the National Research, Development and Innovation Office (NKFIH) under grant number  2024-1.2.3-HU-RIZONT-2024-00030. 
The research of Marianna E.-Nagy and Petra Ren\'ata Rig\'o was individually supported by the J\'anos Bolyai Research Scholarship of the Hungarian Academy of Sciences (2024-2027 and 2025-2028, respectively).

\bibliographystyle{abbrv}
\bibliography{Multiconic}

\newpage

\section{Appendix: Computation of Scaling Point for Lorentz Cone}\label{sc-Lor}
\SetEQ

In this section, for the case $K = {\cal L}_n$, we show how to compute the scaling point $w \in \inter K$, which connects two points, the primal point $x \in \inter K$ and the dual point $s \in \inter K^*$:
\[
\ba{rcl}
s & = & \nabla^2 F(w) x.
\ea
\]
In accordance to \cite{NT1}, this point is a unique solution of the following convex optimization problem:
\beq\label{prob-W}
\min\limits_{w \in \inter K} \Big\{ \la s, w \ra - \la \nabla F(w), x \ra \Big\}.
\eeq
Since $\nabla F(\tau w) \refEQI{eq-Hom12}{1} {1 \over \tau} \nabla F(w)$ for any $\tau >0$, in this problem, we can replace $w$ by $\tau w$ and minimize the resulting objective in $\tau$. Then we come to the problem where the objective function is homogeneous of degree zero:
\[
\min\limits_{w \in \inter K}  2 \Big[ \la s, w \ra \cdot \la - \nabla F(w), x \ra \Big]^{1/2}.
\]
Therefore, we can add the constraint $\langle s,w \rangle = \langle s, x \rangle$ to the minimization problem. The optimal point $w^*$ of problem (\ref{prob-W}) can be represented as follows:
\beq\label{eq-RepW}
\ba{rcl}
w^* & = & \tau_* w_*, \quad \tau_* \; = \; \sqrt{ \la - \nabla F(w_*), x \ra \over \la s, x \ra}, 
\\ \\
w_* & = & \arg\min\limits_{w \in \inter K} \Big\{ \la - \nabla F(w), x \ra: \; \la s, w \ra = \la s, x \ra \Big\}.
\ea
\eeq
Let us show that for the Lorentz cone the latter problem can be solved in a closed form.
Indeed, in this case our variable is $w = (w_0,w_1) \in \R \times \R^n$ and 
\[
\ba{rcl}
F(w) & = &  - \ln \omega(w), \quad \omega(w) \; \Def \; w_0^2 - \| w_1 \|^2.
\ea
\]
Then, the objective function of the problem (\ref{prob-W}) is as follows:
\[
s_0 w_0 + \la s_1, w_1 \ra + {2 \over \omega(w)} [ w_0 x_0 - \la w_1, x_1 \ra].
\]
Hence, the first-order optimality condition for this problem have the following form:
\[
\ba{rcl}
\left( \ba{c} s_0 \\ s_1 \ea \right) + {2 \over \omega(w)} \left( \ba{c} x_0 \\ - x_1 \ea \right) - {4 \over \omega^2(w)} [ w_0 x_0 - \la w_1, x_1 \ra] \left( \ba{c} w_0 \\ - w_1 \ea \right)& = & 0.
\ea
\]
Since $x$ and $w$ are in the interior of the cone ${\cal L}_n$, we have $w_0 x_0 - \la w_1, x_1 \ra > 0$. Hence, the above equation can be written in the following form:
\[
\ba{rcl}
w_0 & = & {\omega^2(w) \over 4 [ w_0 x_0 - \la w_1, x_1 \ra]} s_0 + {\omega(w) \over 2 [ w_0 x_0 - \la w_1, x_1 \ra]} x_0,\\
\\
w_1 & = & {- \omega^2(w) \over 4 [ w_0 x_0 - \la w_1, x_1 \ra]} s_1 + {\omega(w) \over 2 [ w_0 x_0 - \la w_1, x_1 \ra]} x_1.
\ea
\]
In other words, the optimal solution of the problem (\ref{prob-W}) can be represented as follows:
\[
\ba{rcl}
w^*_0 & = & \alpha x_0 + \beta s_0, \quad w^*_1 = \alpha x_1 - \beta s_1
\ea
\]
for certain $\alpha, \beta \in \R_+$. The same is true for the optimal solution $w_*$ of problem (\ref{eq-RepW})$_3$.

Thus, the optimization problem in (\ref{eq-RepW})$_3$ is two-dimensional. Let us show that its solution can be found in a closed form. In accordance to (\ref{eq-RepW})$_3$, we need to solve the following problem:
\beq
\ba{c}
\min\limits_{\alpha, \beta \geq 0} \left\{  2 {x_0 (\alpha x_0 + \beta s_0) - \la \alpha x_1 - \beta s_1, x_1 \ra \over \omega(\alpha,\beta)}  : \; s_0 (\alpha x_0 + \beta s_0) + \la s_1, \alpha x_1 - \beta s_1 \ra = \la s, x \ra \right\}, \nonumber
\ea
\eeq
where $\omega(\alpha, \beta) = (\alpha x_0 + \beta s_0)^2 - \| \alpha x_1 - \beta s_1 \|^2$.
We use  the notations 
\[
\ba{rcl}
\omega_x & = & x_0^2 - \| x_1 \|^2, \quad \omega_s = s_0^2 - \| s_1 \|^2, \quad \la s, x \ra = s_0 x_0 + \la s_1, x_1 \ra.
\ea
\]
Then our problem is as follows:
\[
\ba{c}
\min\limits_{\alpha, \beta \geq 0} \left\{  {\alpha \omega_x + \beta \la s, x \ra \over \alpha^2 \omega_x + \beta^2 \omega_s + 2 \alpha \beta \la s, x \ra}: \; \alpha \la s, x \ra + \beta \omega_s = \la s, x \ra \right\}.
\ea
\]
Let us substitute the value $\beta = {\la s, x \ra \over \omega_s}(1 - \alpha)$ into objective function. Then
\begin{eqnarray*}
\alpha \omega_x + \beta \la s, x \ra & = & \alpha \omega_x + { \la s, x \ra^2 \over \omega_s} (1 - \alpha),\\
\alpha^2 \omega_x + \beta^2 \omega_s + 2 \alpha \beta \la s, x \ra &
= & \alpha^2 \omega_x + {\la s, x \ra^2 \over \omega_s}(1 - \alpha )^2  + 2 \alpha (1 - \alpha) {\la s, x \ra^2 \over \omega_s} \\
& = &  \alpha^2 \omega_x + {\la s, x \ra^2 \over \omega_s}(1 - \alpha^2).
\end{eqnarray*}
Denoting $\Delta =  \la s, x \ra^2- \omega_x \omega_s$, we get the following objective function:
\[
\ba{rcl}
f(\alpha) & = & {\la s, x \ra^2 - \alpha \Delta  \over \la s, x \ra^2 - \alpha^2 \Delta}.
\ea
\]
It is important that $\Delta > 0$. Indeed,
\begin{eqnarray*}
\Delta & = & [ x_0 s_0 + \la x_1, s_1 \ra]^2 - (x_0^2 - \| x_1 \|^2)(s_0^2 - \| s_1 \|^2)\\
& = &  \Big[ \| x_1 \| \, \| s_1 \| + \la x_1, s_1 \ra \Big] \Big[ 2 x_0 s_0 - \| x_1 \| \, \| s_1 \| + \la x_1, s_1 \ra \Big]
 + ( x_0 \| s_1 \| - s_0 \| x_1 \|)^2 \; > \; 0,
\end{eqnarray*}
where we used Cauchy-Schwartz inequality and $(x,s) \in \mathcal{L}$.
The first-order optimality condition for function $f(\cdot)$ is as follows:
\[
\ba{rcl}
{1 \over \la s, x \ra^2 - \alpha \Delta} & = & {2 \alpha \over \la s, x \ra^2 - \alpha^2 \Delta} \quad \Leftrightarrow \quad \left(\alpha \over 1 - \alpha\right)^2 \; = \; {\la s, x \ra^2 \over \omega_x \omega_s}.
\ea
\]
Thus, we come to the following rule:
\begin{eqnarray*}
\alpha_* & = & {\la s, x \ra \over \la s, x \ra + [\omega_x \omega_s]^{1/2}}, \quad \beta_* \; = \; {\la s, x \ra \over \omega_s}(1 - \alpha_*), \quad \tau_* \; = \; \sqrt{{2 \over \la s, x \ra} f(\alpha_*)},\\
\\
w^*_0 & = & \tau_*(\alpha_* x_0 + \beta_* s_0), \quad w^*_1 \; = \; \tau_*(\alpha_* x_1 - \beta_* s_1),
\end{eqnarray*}
where $f(\alpha_*)$ can be computed as follows:
\[
\ba{rcl}
 f(\alpha_*) & = &  {\la s, x \ra^2(1-\alpha_*) + \alpha_* \omega_x \omega_s \over \la s, x \ra^2 (1- \alpha_*^2) + \alpha_*^2 \omega_x \omega_s} \; = \; {1 \over 2\la s, x \ra  \alpha_*}. \QF
\ea
\]

\end{document}